\newtheorem{thm}{Theorem}
\newtheorem{lemma}[thm]{Lemma}
\newtheorem{cor}{Corollary}
\newtheorem*{definition}{Definition}
\newtheorem{theorem}{Theorem}[section]
\def\l{\lambda}
\def\s{\sigma}
\def\r{\rho}
\def\bR{\mathbb R}
\def\DMO{\DeclareMathOperator}
\DMO{\GL}{GL}
\DMO{\SL}{SL}
\DMO{\gl}{\mathfrak{gl}}
\DMO{\ssl}{\mathfrak{sl}}
\title{Existence and boundary behaviour of radial solutions for weighted elliptic systems with gradient terms}
\author{ Daniel Devine\footnote{School of Mathematics, Trinity College Dublin; {\tt dadevine@tcd.ie}} 
    $\;\;$        $\;$
Gurpreet Singh\footnote{School of Mathematical Sciences, Dublin City University, Ireland; {\tt gurpreet.bajwa2506@gmail.com}}}
\date{}
\begin{document}
\maketitle

\begin{abstract}
We are concerned with the existence and boundary behaviour of positive radial solutions for the system
\begin{equation*}
\left\{
\begin{aligned}
\Delta u&=|x|^{a}v^{p}  &&\quad\mbox{ in } \Omega, \\
\Delta v&=|x|^{b}v^{q}f(|\nabla u|) &&\quad\mbox{ in } \Omega,
\end{aligned}
\right.
\end{equation*}
where $\Omega \subset \bR^N$ is either a ball centered at the origin or the whole space $\bR^N$, $a$, $b$, $p$, $q> 0$, and $f \in C^1[0, \infty)$ is an increasing function such that $f(t)> 0$ for all $t> 0$. Firstly, we study the existence of positive radial solutions in case when the system is posed in a ball corresponding to their behaviour at the boundary. Next, we take $f(t) = t^s$, $s> 1$, $\Omega = \bR^N$ and by the use of dynamical system techniques we are able to describe the behaviour at infinity of such positive radial solutions.

\end{abstract}

\smallskip

{\bf{Keywords:}} radial solutions, elliptic systems, nonlinear gradient terms, dynamical systems

\smallskip


\section{Introduction}
In this paper, we study the positive radial solutions for the weighted semilinear elliptic system 
\begin{equation}\label{system}
\left\{
\begin{aligned}
\Delta u&=|x|^{a}v^{p}  &&\quad\mbox{ in } \Omega, \\
\Delta v&=|x|^{b}v^{q}f(|\nabla u|) &&\quad\mbox{ in } \Omega,
\end{aligned}
\right.
\end{equation}
where $\Omega \subset \bR^N$ is either a ball centered at the origin with radius $R> 0$ or the whole space $\bR^N$, $a$, $b$, $p$, $q> 0$ and $f \in C^1[0, \infty)$ is an increasing function such that $f(t)> 0$ for all $t> 0$. We start with the assumption that $u$ and $v$ are positive radially symmetric solutions of \eqref{system}. There is no prior condition at the boundary for $u$ and $v$, but it will be required as we move forward in our analysis as we are concerned with the classification of all the solutions of \eqref{system}. In \cite{S2015}, Singh had studied the system \eqref{system} in case when $a = b = q = 0$, that is, the system 
\begin{equation}\label{gsys}
\left\{
\begin{aligned}
\Delta u&=v^{p}  &&\quad\mbox{ in } \Omega, \\
\Delta v&=f(|\nabla u|) &&\quad\mbox{ in } \Omega.
\end{aligned}
\right.
\end{equation}
The author considered the system \eqref{gsys} both in case of a ball centered at the origin with positive radius and in the whole space $\bR^N$ and classified all the positive radial solutions of \eqref{gsys} together with the behaviour of solutions at the boundary. In case $\Omega$ is a ball, system \eqref{gsys} with $p=1$ and $f(t) = t^2$, that is,
\begin{equation}\label{gsys1}
\left\{
\begin{aligned}
\Delta u&=v &&\quad\mbox{ in } \Omega, \\
\Delta v&=|\nabla u|^2 &&\quad\mbox{ in } \Omega,
\end{aligned}
\right.
\end{equation}
was first studied by Diaz, Lazzo, and Schmidt in \cite{DLS2005}. This special choice of $p$ and function $f$ appears in the study of the dynamics of viscous, heat-conducting fluid. In \cite{DLS2005}, the authors obtained that the system \eqref{gsys1} has one positive solution which blows up at the boundary and the authors also observed that in case of small dimensions ($N\leq 9$), there exists one sign-changing solution which also blows up at the boundary. In \cite{DRS2007, DRS2008}, Diaz, Rakotoson, and Schmidt extended these results to time dependent systems.

\smallskip

The problems related to boundary blow-up solutions can be linked back to atleast a century ago, for instance, Bieberbach \cite{B1916} in 1916 studied the boundary blow-up solutions for the equation $\Delta u=e^u$ in a planar domain. From that point of time onwards, there has been many new techniques which have been devised to tackle such solutions ( see \cite{GRbook2008,GRbook2012,Rbk}). In the last few decades, semilinear elliptic equations with nonlinear gradient terms have been studied extensively for such boundary blow-up solutions ( see \cite{AGMQ2012,CPW2013,FQS2013, FV2017, GNR2002, MMMR2011} ).

In \cite{GGS2019}, Ghergu, Giacomoni and Singh studied the following more general quasilinear elliptic system with nonlinear gradient terms 

\begin{equation}\label{gsys2}
\left\{
\begin{aligned}
\Delta_{p} u&=v^{m} |\nabla u|^{\alpha} &&\quad\mbox{ in } \Omega, \\
\Delta_{p} v&=v^{\beta}|\nabla u|^q &&\quad\mbox{ in } \Omega,
\end{aligned}
\right.
\end{equation}
in which the authors classified all the positive radial solutions in case $\Omega$ is a ball and also obtained the behaviour at infinity of such solutions.

We first consider the case when $\Omega = B_R$ is a ball of radius $R> 0$ and centered at the origin. We obtained that in this case the system \eqref{system} has positive radially symmetric solutions $(u, v)$ such that $u$ or $v$ ( or both ) blow up around $\partial{\Omega}$ if and only if
\begin{equation}\label{gsys3}
\int_{1}^\infty\frac{ds}{\Big(\displaystyle \int_0^s F(t)dt  \Big)^{p/(2p - q+1)}} <\infty \quad\mbox{ where }\; F(t)=\int_0^t f(k)dk.
\end{equation}
We also study the full classification of positive radially symmetric solutions in case the system is posed in a ball. The condtion \eqref{gsys3} is similar to optimal conditions obtained by Keller \cite{K1954} and Osserman \cite{O1957} in 1950s while studying the existence of a solution to the boundary blow-up problem
\begin{equation}\label{gsys4}
\left\{
\begin{aligned}
\Delta u&=f(u)&&\quad\mbox{ in }\Omega,\\
u&=\infty &&\quad\mbox{ on }\partial\Omega,
\end{aligned}
\right.
\end{equation}
where $\Omega\subset \bR^N$ is a bounded domain and $f\in C^{1}[0,\infty)$ is a nonnegative increasing function. The authors obtained that \eqref{gsys4} has $C^2(\Omega)$ solutions if and only if
\begin{equation}\label{gsys5}
\int_1^\infty\frac{ds}{\sqrt{F(s)}}<\infty\quad\mbox{ where }\; F(s)=\int_0^s f(t)dt.
\end{equation}
Equation \eqref{gsys5} has also been seen in various other circumstances as it is related to the maximum principle for nonlinear elliptic inequalities. For example, if $u\in C^2(\Omega)$ is nonnegative and satisfies $\Delta u\leq f(u)$ in $\Omega$, then, if $u$ vanishes at a point in $\Omega$, it must vanish everywhere in $\Omega$. For various extensions of this result, one could refer to Vazquez \cite{V1984} and to Pucci, Serrin and Zou \cite{PS1993,PS2004,PSZ1999}. Next, if $f(t)= t^s$, $s\geq 1$, then in this case we are able to find the exact rate at which the solution $(u, v)$ blows up at the infinity. Here, we have used the dynamical system techniques for cooperative systems with negative divergence to find out the exact blow up rate at infinity. 

\smallskip

\section{Main Results}
First, we consider the system \eqref{system} in case where $\Omega = B_R$, that is, we study the system

\begin{equation}\label{system1}
\left\{
\begin{aligned}
\Delta u&=|x|^{a}v^{p}  &&\quad\mbox{ in } B_R, \\
\Delta v&=|x|^{b}v^{q}f(|\nabla u|) &&\quad\mbox{ in } B_R,
\end{aligned}
\right.
\end{equation}
where $B_R\subset \bR^N,$ $N\geq 2$ is an open ball of radius $R>0$ centred at the origin, $a$, $b$, $p$, $q> 0$ and $f \in C^1[0, \infty)$ is an increasing function such that $f(t)> 0$ for all $t> 0$.

In this paper, sometimes system \eqref{system1} has one of the following boundary conditions:
\begin{itemize}
\item either
$
\mbox{ $u$ and $v$ are bounded in $B_R$ ;}
$
\item or
\begin{equation*}
u\mbox{ is bounded in $B_R$ and } \lim_{|x|\nearrow R}v(x)=\infty ;
\end{equation*}
\item or
\begin{equation*}
\lim_{|x|\nearrow R}u(x)=\lim_{|x|\nearrow R}v(x)=\infty.
\end{equation*}
\end{itemize}

We note that by \eqref{system}, the boundary condition $\lim_{|x|\nearrow R}u(x)=\infty$ and $v$ is bounded is not possible. Throughout this paper, $C$ denotes some positive constant which may vary from line to line, or within the same line, and $r$ will often be used in place of $|x|$.

\begin{theorem}\label{thm1}
Let us suppose that $(u, v)$ is a positive radial solution of \eqref{system1}. Then, we have
\begin{enumerate}
\item[(i)] Both $u$ and $v$ are bounded if and only if
\begin{equation}\label{bounded}
\int_{1}^\infty\frac{ds}{\Big(\displaystyle \int_0^s F(t)dt  \Big)^{p/(2p - q+1)}} = \infty.
\end{equation}
\item[(ii)] $u$ is bounded and $\lim_{|x|\nearrow R}v(r)=\infty$  if and only if
\begin{equation}\label{int1}
\int_{1}^\infty\frac{s ds}{\Big(\displaystyle \int_0^s F(t)dt  \Big)^{p/(2p - q+1)}} <\infty.
\end{equation}
\item[(iii)] $\lim_{|x|\nearrow R}u(x)=\lim_{|x|\nearrow R}v(x)=\infty$ if and only if
\begin{equation}\label{int2}
\int_{1}^{\infty}\frac{ds}{\left(\int_{0}^{s}F(t)dt\right)^{\frac{p}{2p-q+1}}} <\infty\quad\mbox{and} \int_{1}^{\infty}\frac{s\ ds}{\left(\int_{0}^{s}F(t)dt\right)^{\frac{p}{2p-q+1}}}=\infty.
\end{equation}
\end{enumerate}
\end{theorem}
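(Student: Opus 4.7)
The plan is to reduce \eqref{system1} to a coupled radial ODE system and then carry out a Keller--Osserman-type integral analysis in the spirit of Singh \cite{S2015} (who treated the case $a=b=q=0$) and Ghergu--Giacomoni--Singh \cite{GGS2019}. Writing $r=|x|$ and using the radial expression of $\Delta$, the system becomes
\begin{equation*}
(r^{N-1}u'(r))' = r^{N-1+a}v(r)^p, \qquad (r^{N-1}v'(r))' = r^{N-1+b}v(r)^q f(u'(r)),
\end{equation*}
with $u'(0)=v'(0)=0$. Positivity of both right-hand sides shows that $r^{N-1}u'$ and $r^{N-1}v'$ are non-decreasing, so $u'(r),v'(r)\geq 0$ and both $u,v$ are monotone on $[0,R)$; each is therefore either bounded on $[0,R)$ or tends to $+\infty$ as $r\to R^-$. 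The scenario ``$v$ bounded but $u$ unbounded'' is immediately ruled out: if $v\leq M$ then integration of the first ODE gives $u'(r)\leq M^p R^{1+a}/(N+a)$, so $u$ is bounded as well. Only the three listed cases (i)--(iii) remain, and the task is to identify each with the corresponding integral condition.

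The technical core is to establish, for any solution with $v$ unbounded, a sharp two-sided asymptotic
\begin{equation*}
v'(r)\;\asymp\;\Bigl(\int_0^{v(r)} F(t)\,dt\Bigr)^{p/(2p-q+1)}\qquad\text{as }r\to R^-.
\end{equation*}
The upper bound is obtained by first using monotonicity of $v$ to deduce $u'(r)\leq \frac{R^{1+a}}{N+a}v(r)^p$ from the first ODE, feeding this into the second ODE, multiplying by $v'>0$ and using $v^q v'=(v^{q+1})'/(q+1)$, then integrating and converting the right-hand side into an expression in $F$ via integration by parts after the substitution $k=Cv^p$. The matching lower bound requires the two equations to be used in tandem, following the coupled-estimate technique of \cite{S2015,GGS2019}: one derives a conservation-type identity of the form
\begin{equation*}
\frac{d}{dr}\,G(u'(r))\;\asymp\;v(r)^{2p-q}\, v'(r),\qquad G(\sigma):=\int_0^\sigma F(t)\,dt,
\end{equation*}
which after integration yields $G(u'(r))\asymp v(r)^{2p-q+1}$. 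Inserting this back into the second ODE promotes the one-sided bound into the full two-sided comparison above, with the exponent $p/(2p-q+1)$ emerging automatically.

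With the comparison $v'\asymp\Phi(v)$ in hand, where $\Phi(\sigma):=(\int_0^\sigma F)^{p/(2p-q+1)}$, the three cases follow by a standard separation of variables. Integrating $v'=\Phi(v)$ from $r$ to $R$ gives $\int_{v(r)}^\infty ds/\Phi(s)\asymp R-r$ whenever $v$ is unbounded, so $v$ is bounded on $[0,R)$ if and only if $\int^\infty ds/\Phi(s)=\infty$, which is exactly \eqref{bounded}; this proves (i). When $v$ is unbounded, the boundedness of $u$ reduces to the finiteness of $\int_0^R r^{1+a}v(r)^p\,dr$, obtained by integrating the first ODE. Using $G(u')\asymp v^{2p-q+1}$ to rewrite the weight $v^p$ as $G(u')^{p/(2p-q+1)}$ and then changing variables from $r$ to $s=u'(r)$ via $dr\asymp dw/v^p$ converts this into the $s$-weighted form $\int^\infty s\,ds/\Phi(s)$. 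Hence (ii) corresponds to this weighted integral being finite, and (iii) to its divergence coupled with convergence of the unweighted integral $\int^\infty ds/\Phi(s)$.

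The principal technical obstacle is the coupled identity $G(u'(r))\asymp v(r)^{2p-q+1}$ in the second step, from which the sharp exponent $p/(2p-q+1)$ stems: a purely one-sided argument using only $u'\leq Cv^p$ produces a strictly weaker inequality whose exponent does not match the stated integral condition. The exponent $2p-q+1$ is genuinely a two-equation phenomenon, arising from the interplay between the $v^p$ source in the first ODE and the $v^q$ weight in the second, and extracting it cleanly requires careful integration by parts together with both ODEs simultaneously. A secondary subtlety lies in the classification step: the integral that directly controls boundedness of $u$ naturally involves $v^p$, and the translation to the linear weight $s$ announced in the theorem is mediated precisely by the same coupled identity, which converts $v^p$ into a power of $\Phi$ multiplied by $s$.
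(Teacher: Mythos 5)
Your overall strategy — reduce to radial ODEs, derive a Keller--Osserman two-sided differential comparison, and separate variables — is the right one and matches the paper's framework. However, the central technical claim is stated in the wrong variable, and that claim is simply false.

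You assert the two-sided asymptotic
\[
v'(r)\ \asymp\ \Bigl(\int_0^{v(r)}F(t)\,dt\Bigr)^{p/(2p-q+1)}\qquad\text{as }r\to R^-.
\]
What the paper actually establishes (through its pair of inequalities $v^{-(q+1)/2}v'\le C\sqrt{f(u')}$, $v^p\le Cu''$, and the reverse chain $F(u')\le Cv^{p-q}v'$, $u''F(u')\le Cv^{2p-q}v'$) is the identity $G(u')\asymp v^{2p-q+1}$ together with the comparison
\[
u''(r)\ \asymp\ \Bigl(\int_0^{u'(r)}F(t)\,dt\Bigr)^{p/(2p-q+1)},
\]
that is, the natural comparison variable is $w=u'$, not $v$. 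These are not equivalent statements. To see the discrepancy concretely, take $f(t)=t^{\sigma}$, $\sigma\ge 1$, $p=1$, $q=0$, $\sigma=2$. Then a direct blow-up ansatz $v\sim(R-r)^{-\alpha}$, $u'\sim(R-r)^{-\beta}$ with $\beta+1=p\alpha$ and $\alpha+2=q\alpha+\sigma\beta$ gives $\alpha=(\sigma+2)/(\sigma p+q-1)=4$ and $\beta=(2p-q+1)/(\sigma p+q-1)=3$, hence $v'\asymp v^{5/4}$; your formula would predict $v'\asymp v^{(\sigma+2)p/(2p-q+1)}=v^{4/3}$, which is incompatible. (The correct comparison $u''\asymp (u')^{(\sigma+2)p/(2p-q+1)}$ does hold.) For this particular power-law example the integral condition $\int^\infty ds/(\int_0^s F)^{p/(2p-q+1)}<\infty$ turns out to be equivalent to $\int^\infty ds/\text{(your wrong }\Phi(s))<\infty$ — both reduce to $\sigma p+q>1$ — so the final answer would look right, but this is a coincidence of power laws; for general increasing $f\in C^1[0,\infty)$ your displayed comparison is not valid and the proof breaks. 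The downstream step in (ii), where you translate $\int_0^R r^{1+a}v^p\,dr<\infty$ into the weighted integral $\int^\infty s\,ds/\Phi(s)$, inherits the same mismatch; the paper instead works with $\Gamma(t)=\int_t^\infty ds/(\int_0^s F)^{p/(2p-q+1)}$ applied to $w(r)=u'(r)$ and passes to the weighted form by the substitution $t=\Gamma^{-1}(u)$.

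A secondary gap: your write-up proves only the ``only if'' directions. The ``if'' directions require constructing a solution with the prescribed boundary behaviour; the paper does this via a contraction-mapping local existence argument followed by the scaling $u(r)=\tilde u(r/\lambda)$, $v(r)=\lambda^{-(a+2)/p}\tilde v(r/\lambda)$ to reach an arbitrary radius $R$. You mention this only obliquely, and without it the equivalences in the theorem are not established.
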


If $f(t) = t^s$, $s\geq 1$, then theorem \ref{thm1} gives the following result:

\begin{cor}\label{cor1}
Consider the system
\begin{equation}\label{system2}
\left\{
\begin{aligned}
\triangle u&=|x|^av^p &&\quad\mbox{ in }\ B_{R}, \\
\triangle v&=|x|^{b}v^q|\nabla u|^{s} &&\quad\mbox{ in }\ B_{R}.
\end{aligned}
\right.
\end{equation}
 We have:
\begin{enumerate}[label=(\roman*)]
\item All positive radial solutions to \eqref{system2} are bounded if and only if
\begin{equation*}
ps+q-1\leq 0.
\end{equation*}
\item There exists positive radial solutions to \eqref{system2} satisfying $u$ bounded and $\lim_{|x|\nearrow R}v(r)=\infty$ if and only if
\begin{equation*}
s>2\left(1+\frac{1-q}{p}\right).
\end{equation*}
\item There exists positive radial solutions to \eqref{system2} satisfying $\lim_{|x|\nearrow R}u(x)=\lim_{|x|\nearrow R}v(x)=\infty$ if and only if
\begin{equation*}
\frac{1-q}{p}<s\leq2\left(1+\frac{1-q}{p}\right).
\end{equation*}
\end{enumerate}
\end{cor}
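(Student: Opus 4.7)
The plan is to substitute $f(t) = t^s$ directly into the three integral conditions of Theorem~\ref{thm1} and reduce each to an elementary power integral at infinity. To avoid the symbol clash between the exponent $s$ in $f$ and the dummy variable $s$ appearing in \eqref{bounded}--\eqref{int2}, I will rename the latter to $\tau$. First I will compute
\[
F(t)=\frac{t^{s+1}}{s+1},\qquad \int_0^{\tau}F(t)\,dt=\frac{\tau^{s+2}}{(s+1)(s+2)},
\]
so that the integrands in the three conditions become constant multiples of $\tau^{-\alpha}$ and $\tau^{1-\alpha}$, where
\[
\alpha := \frac{(s+2)p}{2p-q+1}.
\]

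Next, invoking the classical convergence criteria for $\int_1^\infty \tau^{-\alpha}\,d\tau$ and $\int_1^\infty \tau^{1-\alpha}\,d\tau$, I will translate Theorem~\ref{thm1} into: (i) both bounded iff $\alpha\le 1$; (ii) $u$ bounded and $v$ blowing up iff $\alpha>2$; (iii) both blowing up iff $1<\alpha\le 2$. In each case, clearing the denominator $2p-q+1$ produces a linear inequality in $s$: one checks that $\alpha\le 1 \Leftrightarrow ps+q-1\le 0$, that $\alpha>2 \Leftrightarrow s>2(1+(1-q)/p)$, and that the combined condition of (iii) rearranges to $(1-q)/p<s\le 2(1+(1-q)/p)$. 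These are precisely the three assertions of the corollary.

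The only point requiring care is the sign of $2p-q+1$ when clearing the denominator, since the inequalities must be reversed if it is negative; however, the non-emptiness of the interval in (iii) already forces $2p-q+1>0$, and Theorem~\ref{thm1} is implicitly formulated in that regime, so this causes no trouble. Consequently I do not anticipate a genuine obstacle in the proof, which is essentially a matter of evaluation followed by elementary algebra, with the bookkeeping of the exponent $\alpha$ being the most error-prone step.
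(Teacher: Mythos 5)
Your proposal is correct and is exactly the derivation the paper intends: the corollary is stated as an immediate consequence of Theorem~\ref{thm1} under the substitution $f(t)=t^{s}$, and no separate proof is given in the paper. Your computation $F(t)=t^{s+1}/(s+1)$, $\int_0^\tau F=\tau^{s+2}/((s+1)(s+2))$, the identification of the exponent $\alpha=(s+2)p/(2p-q+1)$, and the translation of the three convergence/divergence dichotomies for $\int_1^\infty\tau^{-\alpha}d\tau$ and $\int_1^\infty\tau^{1-\alpha}d\tau$ into $\alpha\le1$, $\alpha>2$, $1<\alpha\le2$ all check out, and the algebra clearing $2p-q+1>0$ yields exactly the three stated inequalities in $s$. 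Your side remark about the sign of $2p-q+1$ is also sound: the paper's proof of Theorem~\ref{thm1} implicitly works in the regime $2p-q+1>0$ (e.g. the step passing to $v^{p+\frac{1-q}{2}}$ requires that exponent to be positive), so no sign reversal arises.
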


\smallskip

Now, we study the system \eqref{system2} in the whole space $\bR^N$, that is,

\begin{equation}\label{sys1}
\left\{
\begin{aligned}
\Delta u&= |x|^{a}v^p&&\quad\mbox{ in } \bR^N,\\
\Delta v&= |x|^{b}v^q |\nabla u|^s &&\quad\mbox{ in } \bR^N,
\end{aligned}
\right.
\end{equation}
where $a$, $b$, $p$, $q> 0$ and $s \geq 1$.

\begin{theorem}\label{thm2}
System \eqref{sys1} has positive radial solutions if and only if
\begin{equation}\label{int sqrt f inf}
ps+q-1\leq 0.
\end{equation}
\end{theorem}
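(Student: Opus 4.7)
The plan is to reduce the existence question on $\bR^N$ to the boundary behaviour classification for balls already established in Theorem \ref{thm1} and Corollary \ref{cor1}, and then to combine it with a standard continuation argument for the radial ODE. For the forward implication, suppose $(u,v)$ is a positive radial solution of \eqref{sys1} on $\bR^N$. Then for every $R>0$ the restriction $(u,v)|_{B_R}$ is a positive radial solution of \eqref{system2}, and since $u,v$ are continuous on $\overline{B_R}$ they are bounded there; Corollary \ref{cor1}(i) then forces $ps+q-1\leq 0$. (This matches the reduction of condition \eqref{bounded} to $ps+q-1\leq 0$ under $f(t)=t^s$, obtained from $F(t)=t^{s+1}/(s+1)$ and a direct power computation of the resulting integral.)

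For the converse, assume $ps+q-1\leq 0$ and rewrite the system in radial form
\[
(r^{N-1}u')' = r^{N-1+a}v^p, \qquad (r^{N-1}v')' = r^{N-1+b}v^q(u')^s,
\]
with initial data $u(0)=u_0>0$, $v(0)=v_0>0$, $u'(0)=v'(0)=0$. Standard local existence for the equivalent Volterra integral equations produces a $C^2$ solution on a maximal interval $[0,R_{\max})$, and the integral representations
\[
r^{N-1}u'(r)=\int_0^r \tau^{N-1+a}v(\tau)^p\,d\tau, \qquad r^{N-1}v'(r)=\int_0^r \tau^{N-1+b}v(\tau)^q (u'(\tau))^s\,d\tau,
\]
guarantee $u,v>0$ and $u',v'\geq 0$ on $(0,R_{\max})$. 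Suppose for contradiction that $R_{\max}<\infty$. By maximality at least one of $u,v$ must become unbounded as $r\nearrow R_{\max}$, so $(u,v)|_{B_{R_{\max}}}$ is a positive radial solution of \eqref{system2} falling into case (ii) or (iii) of Theorem \ref{thm1}. But the hypothesis $ps+q-1\leq 0$ is precisely the regime of case (i) in Corollary \ref{cor1}, under which every positive radial solution on $B_{R_{\max}}$ is bounded --- a contradiction. Hence $R_{\max}=\infty$, and we obtain the required global radial solution.

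The main obstacle will be the continuation step: one must confirm that the only possible obstruction to extending the radial solution past a finite $R_{\max}$ is genuine unboundedness of $u$ or $v$, rather than, for instance, a loss of regularity in the coupling term $(u')^s$. This is where the monotonicity of $u$ and $v$ that flows from the integral representation is essential, since it reduces the continuation alternative to exactly the dichotomy handled by Theorem \ref{thm1}. The remaining ingredients --- the integral-convergence calculation relating \eqref{bounded} to $ps+q-1\leq 0$ for $f(t)=t^s$, and the Banach fixed-point argument for local existence at the singular origin $r=0$ --- are routine.
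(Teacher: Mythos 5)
Your converse direction (assuming $ps+q-1\leq 0$, construct a global solution) is essentially the paper's own argument for that implication: local existence via the Volterra formulation, monotonicity from the integral representations, and the observation that Corollary~\ref{cor1}(i) rules out blowup at any finite radius, so $R_{\max}=\infty$. That part is sound.

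Your forward direction has a genuine gap. You restrict a hypothetical global solution to $B_R$, note the restriction is bounded, and then claim Corollary~\ref{cor1}(i) ``forces'' $ps+q-1\leq 0$. But Corollary~\ref{cor1}(i) says that \emph{all} positive radial solutions on a ball are bounded if and only if $ps+q-1\leq 0$; its contrapositive gives only that, when $ps+q-1>0$, \emph{some} positive radial solution on $B_R$ is unbounded. It does not say that \emph{every} positive radial solution on $B_R$ is unbounded --- and that stronger statement would be false: when $ps+q-1>0$ one obtains a bounded solution on $B_R$ simply by restricting a blow-up solution whose maximal radius $R_0$ exceeds $R$. So the boundedness of your particular restriction does not entail $ps+q-1\leq 0$ through the corollary as stated. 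The paper proves this implication instead by a comparison argument: assuming $ps+q-1>0$, it produces from Lemma 2 a blow-up solution $(\tilde u,\tilde v)$ on $B_1$, rescales the putative global solution $(U,V)$ so that $V(0)>\tilde v(0)$, and then exploits cooperativity of the radial ODE system to force $V\geq\tilde v$ on $(0,1)$, hence $V(r)\to\infty$ as $r\to 1$, a contradiction. Your shortcut can be salvaged, but the correct tool is the quantitative estimate in the second half of the proof of Lemma 1 --- which shows that when the Keller--Osserman integral converges, any positive radial solution with positive central data has a finite maximal radius --- rather than the statement of Corollary~\ref{cor1}(i).
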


\smallskip

\begin{theorem}\label{thmrn}
Assume $p< 1$ and $ps + q< 1$. Let $(u,v)$ be a positive radially symmetric solution of \eqref{sys1}. If 
\begin{equation}\label{div}
\frac{p(s-2)(s+as+b+2)}{1-ps-q} \leq 2(N+a-1),
\end{equation}
then
\begin{equation*}
\lim_{|x|\rightarrow \infty}\frac{u(x)}{|x|^{\frac{(a+2)(1-ps-q)+ ps(a+1)+bp+2q}{1-ps-q}}}= \frac{(AB^s K)^{\frac{p}{ps + q - 1}}}{DK}
\end{equation*}
and
\begin{equation*}
\lim_{|x|\rightarrow \infty}\frac{v(x)}{|x|^{\frac{(a+1)s + b + 2}{1-ps-q}}}= (AB^s K)^{\frac{1}{ps + q - 1}},
\end{equation*}
where, 
$$
A = 2 + \frac{b+2q+s(1+a+2p)}{1-ps-q},
$$
	
$$
B = N+a + p\Big(2+\frac{b+2q+s(1+a+2p)}{1-ps-q}\Big),
$$
	
$$
K = N + \frac{b+2q+s(1+a+2p)}{1-ps-q},
$$
	
$$
D = 2+a + p\Big(2+\frac{b+2q+s(1+a+2p)}{1-ps-q}\Big).
$$
\end{theorem}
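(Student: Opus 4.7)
The plan is to translate the radial form of \eqref{sys1} into an autonomous dynamical system, locate the unique positive equilibrium encoding the asserted rates, and prove that every positive trajectory converges to it. Writing $t = \log r$ and using $r^2\Delta u = r^{a+2} v^p$, $r^2\Delta v = r^{b+2} v^q (u')^s$, I would introduce the Emden--Fowler variables
\begin{equation*}
X = \frac{ru'}{u},\quad Y = \frac{rv'}{v},\quad Z = \frac{r^{a+2} v^p}{u},\quad W = \frac{r^{b+2} (u')^s}{v^{1-q}},
\end{equation*}
which satisfy the closed autonomous system
\begin{align*}
\dot X &= (2-N)X - X^2 + Z, & \dot Z &= Z[(a+2) + pY - X],\\
\dot Y &= (2-N)Y - Y^2 + W, & \dot W &= W[b+2 - s(N-1) + (q-1)Y + sZ/X].
\end{align*}

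Setting these to zero in the positive octant, $\dot Z = 0$ gives $X = a + 2 + pY$, and combining this with $\dot W = 0$, $\dot X = 0$ reduces to $Y(1-ps-q) = (1+a)s + b + 2$, singling out the unique positive equilibrium $(D, A, DB, AK)$. The algebraic conditions $DB\, C_u = C_v^p$ and $AK\, C_v = C_v^q (DC_u)^s$ then produce $C_v = (AB^s K)^{1/(ps+q-1)}$ and the value of $C_u$ quoted in the statement.

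The heart of the proof is to show that every positive trajectory tends to $(D, A, DB, AK)$ as $t \to \infty$. Using the assumptions $p < 1$ and $ps + q < 1$ together with Theorem \ref{thm2}, one first establishes a priori bounds placing $(X, Y, Z, W)$ in a compact set bounded away from the coordinate hyperplanes. The system inherits a monotone structure from the original elliptic system, so Hirsch-type theory for cooperative flows guarantees that bounded orbits approach the set of equilibria once periodic and other recurrent orbits are excluded. The divergence hypothesis \eqref{div} plays exactly that role: using the identity $s + as + b + 2 = A(1-ps-q)$ it rewrites as $p(s-2)A \leq 2(N+a-1)$, which is precisely the inequality needed to render the divergence of the vector field (in a suitable weighted form) non-positive on the invariant region, and a Dulac--Bendixson argument then excludes limit cycles. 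Convergence $X(t) \to D$, $Z(t) \to DB$ (and $Y(t) \to A$, $W(t) \to AK$) pulls back to $u(r) \sim C_u r^{\alpha_u}$ and $v(r) \sim C_v r^{\alpha_v}$ with the stated constants.

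The principal obstacle will be this convergence step: the four-dimensional phase space and the non-polynomial coupling $sZ/X$ in $\dot W$ make the divergence computation delicate, and the precise choice of Dulac multiplier under which \eqref{div} delivers non-positivity, together with verifying forward invariance of the compact region where the cooperative machinery applies, is where the technical work is concentrated.
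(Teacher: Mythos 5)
Your choice of Emden--Fowler variables is where the plan breaks down, and the breakdown is not merely technical. With $Z = r^{a+2}v^p/u$, $W = r^{b+2}(u')^s/v^{1-q}$ you arrive at a genuinely four-dimensional autonomous system, and moreover that system is \emph{not} cooperative: already $\partial \dot Z/\partial X = -Z < 0$, and likewise $\partial \dot W/\partial X = -sWZ/X^2 < 0$ and $\partial \dot W/\partial Y = (q-1)W < 0$ since $q<1$. So the ``monotone structure inherited from the original elliptic system'' that you invoke is simply absent in your coordinates, and the Hirsch machinery does not apply. Independently of that, even if you could massage the coordinates into a cooperative form, the Poincar\'e--Bendixson type theorems of Hirsch (\cite{Hirsch1989,Hirsch1990}, quoted as Theorems \ref{thmdet1}--\ref{thmdet3}) hold for cooperative flows in $\bR^3$, not $\bR^4$; there is no four-dimensional analogue that would let a negative-divergence/Dulac argument exclude all recurrence. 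You correctly identify the four-dimensionality and the nonpolynomial $sZ/X$ term as ``the principal obstacle,'' but flagging it does not close the gap --- the argument as sketched does not go through.

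The paper's proof hinges on a different, more careful choice of variables precisely engineered to avoid both problems: with $t=\ln r$, it sets $X = ru'/u$, $Y = rv'/v$, $Z = r^{a+1}v^p/u'$, $W = r^{b+1}v^q(u')^s/v'$. The resulting vector field for $(Y,Z,W)$ does not involve $X$ at all, so the flow \emph{closes up} to a genuinely three-dimensional \emph{polynomial} system, and this subsystem really is cooperative ($\partial \dot Y/\partial W = Y\ge 0$, $\partial \dot Z/\partial Y = pZ\ge 0$, $\partial \dot W/\partial Y = qW\ge 0$, $\partial \dot W/\partial Z = sW\ge 0$, all other off-diagonal entries vanish). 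The variable $X$ is then recovered at the end via the identity $\lim_t X = 2-N+\lim_t Z$. After that reduction, a priori bounds (the comparison lemma, Lemma \ref{lrn3}), the verification that the target equilibrium $\xi_2$ is hyperbolic and asymptotically stable, and the divergence computation together with hypothesis \eqref{div} all take place in $\bR^3$, where Theorems \ref{thmdet1}--\ref{thmdet3} are applicable and exclude cycles, leaving convergence to $\xi_2$ as the only possibility. If you want to salvage your plan, the essential missing idea is this reduction to a closed cooperative 3D polynomial subsystem via the $(Y,Z,W)$ variables above --- without it, the Hirsch/Dulac strategy cannot be made to work.
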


\section{Notes on dynamical systems}
Let $x=(x_1,x_2,x_3)$, $y=(y_1,y_2,y_3)$ be any two points in $\bR^{3}$, then for $x_i\leq y_i$ we write
$$
x\leq y 
$$
where $i=1,2,3$. For $x\leq y$ and $x\neq y$, we write $x<y$.

Also, the open ordered interval is defined as
$$
[[x,y]]=\{z\in \bR^3:x<z<y\}\subset \bR^3.
$$
Consider the initial value problem
\begin{equation}\label{det1}
\left\{
\begin{aligned}
&\xi_{t}=h(\xi) \quad\mbox{ for } t\in \bR,\\
&\xi(0)=\xi_{0},
\end{aligned}
\right.
\end{equation}
where $ h:\bR^{3}\rightarrow \bR$ is a $C^{1}$ function. This implies that there exists a unique solution $\xi$ of \eqref{det1} defined in a maximal time interval for any $\xi_{0}\in \bR^{3}$.  Let $\varphi(\cdot,\xi_{0})$ denotes the flow associated to \eqref{det1}, that is,  $t\longmapsto \varphi(t,\xi_{0})$ is the unique solution of \eqref{det1} defined in a maximal time interval. Let us suppose that the vector field $h$ is cooperative, that is
$$
\frac{\partial h_{i}}{\partial x_{j}}\geq 0 \quad \mbox{ for } 1\leq i,j\leq 3,\;\; i\neq j.
$$
Next, follow the results due to Hirsch \cite{Hirsch1989, Hirsch1990}.
\begin{theorem}\label{thmdet1}{\rm (see \cite[Theorem 1]{Hirsch1990})}
Any compact limit set of \eqref{det1} contains an equilibrium or is a cycle.
\end{theorem}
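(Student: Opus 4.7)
The plan is to invoke the general theory of cooperative (order-preserving) flows developed by Hirsch, whose machinery reduces a three-dimensional dynamical question to a planar one. The argument has a standard three-step structure: establish monotonicity of the flow, show that compact limit sets are unordered, and then project to a plane to apply Poincar\'e--Bendixson.

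First, I would check that the flow $\varphi$ of \eqref{det1} is monotone with respect to the componentwise partial order $\le$: if $\xi_0 \le \eta_0$ then $\varphi(t,\xi_0) \le \varphi(t,\eta_0)$ for all $t \ge 0$ in the common interval of existence. This is Kamke's comparison theorem, whose quasimonotonicity hypothesis is exactly the cooperativity condition $\partial h_i/\partial x_j \ge 0$ for $i \ne j$ imposed in the hypotheses.

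Second, I would establish the \emph{Nonordering Principle}: if $L$ is a compact $\omega$- or $\alpha$-limit set of \eqref{det1}, then no two distinct points of $L$ are strictly comparable in the partial order. The argument is by contradiction: if $y < z$ both lay in some $\omega(\xi_0) = L$, one extracts times $t_n \to \infty$ with $\varphi(t_n,\xi_0) \to y$ and $s_n \to \infty$ with $\varphi(s_n,\xi_0) \to z$, and the combination of monotonicity with the invariance of $L$ traps the orbit between ordered translates of itself, contradicting its accumulation at both $y$ and $z$. This is the substantive step and corresponds to the Limit Set Dichotomy in Hirsch's theory.

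Third, once $L$ is unordered, the difference $z - y$ between any two distinct points of $L$ has coordinates of mixed sign and is therefore not parallel to $(1,1,1)$. Consequently the orthogonal projection $\pi:\bR^3 \to \{x_1+x_2+x_3 = 0\}$ restricts to a homeomorphism of $L$ onto a compact subset of a $2$-plane, and transporting the flow structure through $\pi$ yields a continuous planar flow on $\pi(L)$. Assuming $L$ contains no equilibrium, the classical Poincar\'e--Bendixson theorem applied to this planar image forces $\pi(L)$, and therefore $L$ itself, to be a cycle.

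The main obstacle is the Nonordering Principle. Monotonicity of the flow does not by itself forbid two ordered points from being jointly recurrent, and ruling this out genuinely requires the finer convergence and dichotomy results of the theory of monotone dynamical systems. Once that is in hand, the codimension-one projection and the appeal to Poincar\'e--Bendixson in the plane are essentially formal, and the statement follows as in Hirsch \cite{Hirsch1990}.
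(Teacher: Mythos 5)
The paper does not prove this theorem: it is quoted verbatim as Theorem~1 of Hirsch's paper \cite{Hirsch1990} and used as a black box, so there is no in-text proof to compare against. What you have written is a sketch of Hirsch's own argument rather than an alternative to anything in the present paper.

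As a reconstruction of Hirsch's proof, your outline has the right shape: Kamke's comparison theorem gives monotonicity of the flow from the cooperativity hypothesis, the Nonordering Principle (unorderedness of compact limit sets) is indeed the substantive dynamical input, and the reduction to a planar problem proceeds by projecting an unordered set along a strictly positive direction such as $(1,1,1)$. However, your third step glosses over a genuine difficulty which you dismiss as ``essentially formal.'' Conjugating the restricted flow on $L$ through the homeomorphism $\pi|_L$ gives a continuous flow on the compact set $\pi(L)$, but the Poincar\'e--Bendixson theorem is a statement about $\omega$-limit sets of flows defined on \emph{open} planar regions, and neither of the following is automatic: (i) that the transported dynamics extends to a flow with locally unique trajectories (e.g.\ a Lipschitz flow) on a planar neighbourhood of $\pi(L)$; nor (ii) that $\pi(L)$ is itself realised as an $\omega$- or $\alpha$-limit set, or at least a chain-transitive set, of that extended planar flow, rather than merely a compact connected invariant set (which, in the plane, could for instance be an annulus foliated by cycles). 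Hirsch's actual argument establishes both points, and this is where most of the technical content lies; closing that gap is precisely what separates the cooperative three-dimensional Poincar\'e--Bendixson theorem from a routine projection argument. Your sketch is a fair conceptual summary, but as a proof it is incomplete at exactly the step you labelled formal.
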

\begin{definition}
A finite sequence of equilibria $\zeta_{1},\zeta_{2},\dots,\zeta_{n}=\zeta_{1}$, $(n\geq 2)$ is known as a circuit such that $W^{u}(\xi_{i})\cap W^{s}(\xi_{i+1})$ is non-empty. Here, $W^{u}$ and $W^{s}$ respresents the stable and unstable manifolds respectively.
\end{definition}

\medskip
{\bf{Note: }}	There is no circuit in case all the equilibria are hyperbolic and also their stable and unstable manifolds are mutually transverse.

\begin{theorem}\label{thmdet2}{\rm (see \cite[Theorem 2]{Hirsch1990}). }
Let us assume that $L\subset \bR^{3}$ is a compact set such that:
\begin{enumerate}
\item [ (i) ] There is no circuit and all the equilibria in $L$ are hyperbolic.
\item [ (ii)] The number of cycles in $L$ which have period less than or equal to $T$ is finite, where $T>0$.
\end{enumerate}
Then:
\begin{enumerate}
\item [ (a) ] Every limit set in $L$ is an equilibrium or cycle.
\item [ (b) ] $L$ has finite number of cycles.
\end{enumerate}
	
\end{theorem}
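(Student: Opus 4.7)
The plan is to deduce the theorem from the three pillars of Hirsch's theory of three-dimensional cooperative flows: (1) the \emph{Nonordering Principle} for omega-limit sets of cooperative systems; (2) a \emph{flattening} that realizes unordered compact invariant sets in $\bR^3$ as Lipschitz graphs over a two-dimensional transversal hyperplane; and (3) the classical planar Poincar\'e--Bendixson theorem applied to the resulting essentially planar dynamics. Part (a) then falls out by combining (1)--(3) with hypothesis (i), while part (b) requires an additional compactness plus hyperbolicity argument that uses (ii).

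By cooperativity, $\varphi(t,\cdot)$ is order-preserving: $x\leq y$ implies $\varphi(t,x)\leq\varphi(t,y)$ for all $t\geq 0$ at which both sides are defined. From this I would first show that for every omega-limit set $\omega(\xi_{0})\subset L$ and every pair $p,q\in \omega(\xi_{0})$, the points $p$ and $q$ are incomparable for $\leq$. Supposing $p<q$, choose $t_{k}<s_{k}\to\infty$ with $\varphi(t_{k},\xi_{0})\to p$ and $\varphi(s_{k},\xi_{0})\to q$, and fix $z$ with $p<z<q$; iterating the order-preserving flow on the blocks $[t_{k},s_{k}]$ produces a strictly monotone subsequence of the orbit, contradicting boundedness in $L$. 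With Nonordering in hand, let $\pi$ be orthogonal projection onto the hyperplane $H=\{x\in\bR^{3}:x_{1}+x_{2}+x_{3}=0\}$. If $x\neq y$ are incomparable, then $y-x$ is not a scalar multiple of $(1,1,1)$, so $\pi(x)\neq \pi(y)$; hence $\pi$ realizes $\omega(\xi_{0})$ as the graph of a Lipschitz function over a compact subset of $H\cong\bR^{2}$, and the flow on $\omega(\xi_{0})$ conjugates to a continuous flow on this planar image.

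Applying the planar Poincar\'e--Bendixson theorem to the conjugate flow shows that every limit set is either a single equilibrium, a single cycle, or a heteroclinic graph, and the last of these pulls back to a circuit in $\bR^{3}$ in the sense of the definition preceding the theorem. Hypothesis (i) excludes this third alternative, giving part (a). For part (b), suppose for contradiction that $L$ contains infinitely many cycles $\{\Gamma_{n}\}$. Hypothesis (ii) forces their periods $\tau_{n}$ to tend to infinity along a subsequence, since otherwise truncating at any $T$ would leave only finitely many. Compactness of $L$ then extracts a Hausdorff-convergent subsequence $\Gamma_{n}\to\Gamma^{\ast}$, and by part (a) applied to the compact invariant limit set $\Gamma^{\ast}$, the latter is either an equilibrium $\zeta$ or a cycle. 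If $\Gamma^{\ast}=\{\zeta\}$, then cycles with unbounded periods shrinking to $\zeta$ would violate the Hartman--Grobman linearization, since a hyperbolic equilibrium admits a neighborhood containing no closed orbits. If $\Gamma^{\ast}$ is a cycle, take a Poincar\'e section transverse to $\Gamma^{\ast}$ in the planar conjugate and pass to the return map; accumulation of $\Gamma_{n}$ at $\Gamma^{\ast}$ yields a sequence of fixed points of iterates of this return map clustering at the fixed point of $\Gamma^{\ast}$, which a local analysis combined with the no-circuit hypothesis (preventing the return map from inheriting degenerate saddle connections) rules out.

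The main obstacle I anticipate is precisely the second case of part (b): ruling out accumulation of cycles onto another cycle through the planar return map. It is here that the no-circuit hypothesis must be used in tandem with hyperbolicity of equilibria, and the crux lies in translating the \emph{global} condition ``no circuit in $L$'' into a \emph{local} exclusion of infinitely many periodic points of the return map converging to a single fixed point.
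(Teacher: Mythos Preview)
The paper does not prove this theorem at all: it is stated in the preliminary section ``Notes on dynamical systems'' purely as a citation of \cite[Theorem~2]{Hirsch1990}, alongside the other quoted results of Hirsch, and is then invoked as a black box in the proof of Theorem~\ref{thmrn}. There is therefore no proof in the paper to compare your proposal against.

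That said, your sketch does follow the broad architecture of Hirsch's own argument --- nonordering of limit sets for monotone flows, Lipschitz projection of unordered invariant sets onto a transversal hyperplane, and a Poincar\'e--Bendixson analysis of the resulting planar dynamics --- so as an outline of how one would actually establish the cited result it is on the right track. Two places where your write-up is loose: (1) in the nonordering step, a strictly monotone bounded sequence in $\bR^3$ does not in itself yield a contradiction (it converges); the genuine argument needs the stronger fact that if $p<q$ both lie in $\omega(\xi_0)$ then the orbit is eventually trapped in an order interval that shrinks, forcing $\omega(\xi_0)$ to be a singleton; and (2) in part~(b), the case where cycles accumulate on a limit cycle is exactly where Hirsch's argument is delicate, and your appeal to ``a local analysis combined with the no-circuit hypothesis'' is a placeholder rather than an argument --- in Hirsch's paper this step uses structural stability considerations specific to the monotone setting and is not a simple consequence of planar return-map reasoning. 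If your goal was only to reproduce what the present paper does, the correct answer is simply to cite \cite{Hirsch1990}.
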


\begin{theorem}\label{thmdet3}{\rm (see \cite[Theorem 7]{Hirsch1989})}
Assume that $\xi_{1}$, $\xi_{2}\in \bR^{3}$ such that $\xi_1< \xi_2$. Further, if
$$
{\rm div}{h}<0 \quad\mbox{ in } [[\xi_1,\xi_2]],
$$
then there are no cycles of \eqref{det1} in $[[\xi_1,\xi_2]].$
\end{theorem}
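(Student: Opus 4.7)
The plan is to reduce the radial form of \eqref{sys1} to a three-dimensional autonomous cooperative dynamical system, identify the unique positive equilibrium (whose coordinates encode $A,B,K,D$), and use Theorems~\ref{thmdet1}--\ref{thmdet3} to force the orbit associated with $(u,v)$ to converge to this equilibrium as $r=|x|\to\infty$; translating back then gives the claimed asymptotic rates.

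In radial coordinates \eqref{sys1} reads $u''+\tfrac{N-1}{r}u'=r^{a}v^{p}$ and $v''+\tfrac{N-1}{r}v'=r^{b}v^{q}(u')^{s}$, with $u',v'>0$. With $t=\log r$ I would introduce
\begin{equation*}
\xi_{1}(t)=\tfrac{rv'(r)}{v(r)},\qquad \xi_{2}(t)=\tfrac{ru''(r)}{u'(r)},\qquad \xi_{3}(t)=\tfrac{rv''(r)}{v'(r)},
\end{equation*}
and verify by direct differentiation that $\xi=(\xi_{1},\xi_{2},\xi_{3})$ satisfies the autonomous system $\xi_{t}=h(\xi)$ with $h_{1}=\xi_{1}(\xi_{3}+1-\xi_{1})$, $h_{2}=(\xi_{2}+N-1)(a+1-\xi_{2}+p\xi_{1})$, $h_{3}=(\xi_{3}+N-1)(b+1-\xi_{3}+q\xi_{1}+s\xi_{2})$. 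The fourth natural coordinate $ru'/u$ decouples (its evolution depends only on $\xi_{2}$ and itself) and is recovered a posteriori, which is what makes the three-variable closure work. The unique equilibrium of $h$ in the open positive octant satisfies $\xi_{1}^{\ast}=A$, $\xi_{2}^{\ast}+N-1=B$, $\xi_{3}^{\ast}+N-1=K$, while the decoupled variable converges to $D$; matching coefficients in the PDEs at $\xi^{\ast}$ then produces the two stated prefactors $(AB^{s}K)^{1/(ps+q-1)}$ and $(AB^{s}K)^{p/(ps+q-1)}/(DK)$.

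The analytic core is to apply the Hirsch machinery. Positivity of $u',v',u'',v''$, obtained from the integrated identities $(r^{N-1}u')'=r^{N-1+a}v^{p}$ and its analogue, traps the trajectory in the open octant $\{\xi_{1},\xi_{2}+N-1,\xi_{3}+N-1>0\}$, on which $h$ is cooperative since $p,q,s>0$. A direct computation gives
\begin{equation*}
\mathrm{div}\,h(\xi)=(p+q-2)\xi_{1}+(s-2)\xi_{2}-\xi_{3}+(a+b+5-2N),
\end{equation*}
which is affine. I would then construct a compact ordered interval $[[\xi^{-},\xi^{+}]]$ containing the $\omega$-limit set, using monotonicity of the cooperative flow together with the equilibrium coordinates as bounds. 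Maximising the affine $\mathrm{div}\,h$ over $[[\xi^{-},\xi^{+}]]$---the worst case is the corner where $\xi_{1},\xi_{3}$ are smallest and, when $s\geq 2$, $\xi_{2}$ is at its equilibrium value---and requiring the maximum to be non-positive reduces, after substituting the explicit expression for $A$, to precisely
\begin{equation*}
\frac{p(s-2)(s+as+b+2)}{1-ps-q}\leq 2(N+a-1),
\end{equation*}
the hypothesis of the theorem. Theorem~\ref{thmdet3} then rules out cycles in $[[\xi^{-},\xi^{+}]]$; a Jacobian computation at $\xi^{\ast}$ shows it is hyperbolic under $p<1$ and $ps+q<1$, and being the only equilibrium in $[[\xi^{-},\xi^{+}]]$ there are no circuits, so Theorems~\ref{thmdet1}--\ref{thmdet2} force $\xi(t)\to\xi^{\ast}$.

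Finally, $\log v(r)=\int^{\log r}\xi_{1}(\tau)\,d\tau+O(1)$ together with $\xi_{1}\to A$ yields $v(r)\sim C_{v}r^{A}$, and the analogous identity for $\log u$ through $ru'/u\to D$ gives $u(r)\sim C_{u}r^{D}$; the prefactors $C_{u},C_{v}$ are identified by the coefficient analysis of the first step. The main obstacle is the geometric step: constructing the precise ordered interval $[[\xi^{-},\xi^{+}]]$ containing the $\omega$-limit set and showing that maximising the affine divergence over it collapses to exactly the stated inequality rather than a strictly weaker or stronger one. Secondary technical points are verifying hyperbolicity of $\xi^{\ast}$ through the $3\times 3$ Jacobian uniformly in the admissible parameter range, and the algebraic bookkeeping that matches the equilibrium coordinates and coefficient-balance constants with $A,B,K,D$.
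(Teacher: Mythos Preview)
Your proposal does not address the stated theorem. Theorem~\ref{thmdet3} is a background result of Hirsch (see \cite[Theorem~7]{Hirsch1989}) asserting that a three-dimensional cooperative system with negative divergence in an ordered interval admits no cycles there; the paper does not prove it but merely quotes it for later use. A proof of Theorem~\ref{thmdet3} would require an argument about general cooperative systems (for instance, the area-contraction argument in Hirsch's paper), and your write-up contains nothing of the sort.

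What you have written is instead a proof outline for Theorem~\ref{thmrn}, the asymptotic-behaviour result for positive radial solutions of \eqref{sys1}. Viewed as such, it is essentially the paper's own argument: your variables $(\xi_1,\xi_2,\xi_3)$ are exactly the paper's $(Y,Z-(N-1),W-(N-1))$, since the radial equations give $ru''/u'=Z-(N-1)$ and $rv''/v'=W-(N-1)$. The steps you list---cooperativity, bounding the orbit into a closed ordered interval $[[\xi_1,\xi_2]]$ via comparison, checking ${\rm div}\,h<0$ there to exclude cycles by Theorem~\ref{thmdet3}, verifying asymptotic stability of the positive equilibrium, and invoking Theorems~\ref{thmdet1}--\ref{thmdet2}---mirror the paper's Lemmas~\ref{lrn1}--\ref{lrn3} and the concluding paragraph of Section~6. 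The only substantive point on which you are vaguer than the paper is the construction of the trapping interval: the paper obtains the explicit bounds \eqref{rn6}--\eqref{rn8} by showing $\lim_{t\to-\infty}(Y,Z,W)=\xi_1$ and then applying the comparison Lemma~\ref{lrn1} against the constant solution $\xi_2$, which is the concrete mechanism you allude to but do not spell out.
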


\bigskip

\section{ Proof of Theorem \ref{thm1}}
We divide our proof into the following two lemmas:

\begin{lemma} 
Let $(u, v)$ be a positive radial solution of \eqref{system1}. Then
\begin{equation*}
 \lim_{|x|\nearrow R}v(x)=\infty
\end{equation*}
if and only if 
\begin{equation*}
    \int_{1}^{\infty}\frac{ds}{\left(\int_{0}^{s}\sqrt{f(t)}dt\right)^{\frac{2p}{2p-q+1}}} <\infty.
\end{equation*}
\end{lemma}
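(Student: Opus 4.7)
The plan is a Keller--Osserman-style energy argument on the radial ODE for the necessity direction, and a shooting argument for the converse.

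First, I write \eqref{system1} in radial form as
\[
(r^{N-1}u')'=r^{N-1+a}v^p,\qquad (r^{N-1}v')'=r^{N-1+b}v^q f(u'),
\]
with $u'(0)=v'(0)=0$. Positivity of the right-hand sides forces $u,v,u',v'>0$ and all four strictly increasing on $(0,R)$.

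For the necessity direction, assume $v(r)\to\infty$ as $r\to R$. Integrating the first equation and using monotonicity of $v$,
\[
u'(r)\le\frac{r^{a+1}}{N+a}v^p(r)\le Cv^p(r)\qquad\text{for } r\in[R/2,R).
\]
Substituting into the second equation and using monotonicity of $f$ gives $v''(r)\le Cv^q(r)f(Cv^p(r))$ near $R$. Multiplying by $v'$, integrating from a fixed $r_1$ up to $r$, and switching to $s=v(t)$ as the integration variable yields
\[
(v'(r))^2\le (v'(r_1))^2+2C\int_{v(r_1)}^{v(r)}s^q f(Cs^p)\,ds.
\]
Setting $H(s)=\int_0^s t^qf(Ct^p)\,dt$, this gives $v'(r)\le C'\sqrt{H(v(r))+1}$. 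Separating variables and integrating on $[r_1,R)$ yields $\int^\infty ds/\sqrt{H(s)+1}<\infty$. The final step, and the place where the specific exponent $2p/(2p-q+1)$ is pinned down, is to rewrite this in the lemma's form: a substitution $\tau=Cs^p$ inside $H$ together with the Cauchy--Schwarz inequality $(\int_0^\tau\sqrt{f}\,)^2\le\tau\int_0^\tau f$ shows that convergence of $\int^\infty ds/\sqrt{H(s)}$ is equivalent to convergence of $\int_1^\infty ds/(\int_0^s\sqrt{f(t)}\,dt)^{2p/(2p-q+1)}$.

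For sufficiency I would run a shooting argument: let $(u_\lambda,v_\lambda)$ denote the radial solution with $v(0)=\lambda$. A matching lower bound $(v')^2\ge cH(v)$ for large $v$ together with convergence of the lemma's integral forces $v_\lambda$ to blow up at some finite radius $R(\lambda)>0$. Continuous dependence and the scaling structure of the system then give $R(\lambda)\to 0$ as $\lambda\to\infty$ and $R(\lambda)\to\infty$ as $\lambda\to 0^+$, so an intermediate choice $\lambda^*$ satisfies $R(\lambda^*)=R$ and produces a solution that blows up exactly at the boundary. The secondary difficulty here is establishing the matching lower bound cleanly: the crude upper bound $u'\le Cv^p$ is too lossy, so a sharper estimate on $u'$ in terms of $v$ near the blow-up set is needed before the shooting argument can be closed.
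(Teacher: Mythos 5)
Your overall architecture---energy-type differential inequalities and separation of variables for necessity, local existence plus a blow-up estimate for sufficiency---matches the paper's, but there is a genuine gap in your necessity argument. The paper's proof deliberately tracks $w=u'$ rather than $v$: it multiplies $v''\le r^b v^q f(w)$ by $v'$ and integrates \emph{without} yet substituting $w\le Cv^p$, which produces $v'\le C v^{(q+1)/2}\sqrt{f(w)}$, and only afterwards uses the two-sided bound $cw'\le v^p\le Cw'$ to convert to a clean $w$-axis integral of $\sqrt{f}$, landing directly on the exponent $2p/(2p-q+1)$. You instead substitute $u'\le Cv^p$ into $f$ immediately, arriving at $\int^\infty ds/\sqrt{H(s)}<\infty$ with $H(s)=\int_0^s t^q f(Ct^p)\,dt$. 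Passing from this to the lemma's condition $\int_1^\infty ds/\bigl(\int_0^s\sqrt{f}\bigr)^{2p/(2p-q+1)}<\infty$ is exactly the place where your argument is not a proof: the substitution $\tau=Cs^p$ leaves a residual weight $\tau^{(q+1)/p-1}$ inside the integral, Cauchy--Schwarz gives only one direction of comparison, and the two integrands genuinely differ by a power of $s$ (even when $f(t)=t^\nu$ they agree only in their convergence threshold, not asymptotically). You would need a separate lemma in the spirit of Lemma 4.1 of \cite{S2015}, and as written the step is asserted, not proved.

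Your sufficiency sketch also has an unresolved hole that you partly acknowledge: the claim $R(\lambda)\to\infty$ as $\lambda\to0^+$ in the shooting argument is not justified, and for Keller--Osserman-type problems it need not hold. The paper sidesteps this entirely by constructing a solution that blows up at some finite $R_0$ and then rescaling \emph{both} the unknowns and the nonlinearity $f$ itself via $\tilde f(t)=\lambda^{\,b+a(1-q)/p+2(1+(1-q)/p)}f(t/\lambda)$, which allows the blow-up radius to be placed at any prescribed $R$; a plain shooting argument in $v(0)$ with $f$ fixed does not obviously reach every $R$. Finally, a related warning: to close the argument you need the lower bound $u''\ge c\,r^a v^p$ (coming from the extra $(N-1)u'/r$ term), not just the upper bound; the paper records this as the two-sided estimate on $u''$ and uses both sides, while your note about the ``lossy'' bound on $u'$ shows you noticed the symptom but did not supply the fix.
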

\begin{proof}
First, assume that $\lim_{r\to R}v(r)=\infty$. System \eqref{system} can be rewritten as
\begin{equation*}
	\left\{
    \begin{aligned}
    &(u'(r)r^{N-1})'= r^{N+a-1}v^{p}(r),\\
    &(v'(r)r^{N-1})'= r^{N+b-1}v^{q}(r)f(|\nabla u|),\\
    &u'(0)=v'(0)=0.
    \end{aligned}
\right.
\end{equation*}
An integration over $(0,r)$, $0<r<R$, then gives us
\begin{equation*}
	\left\{
    \begin{aligned}
    u'(r)&= r^{1-N}\int_{0}^{t}t^{N+a-1}v^{p}(t)dt,\\
    v'(r)&=r^{1-N}\int_{0}^{t}t^{N+b-1}v^{q}(t)f(|\nabla u|)dt,
    \end{aligned}
\right.
\end{equation*}
from which it follows that $u$ and $v$ are increasing in $(0,R)$. We can thus take the first integral equation above and estimate $u'$ as 
\begin{equation*}
    u'(r)\leq r^{1-N}v^{p}(r)\int_{0}^{r}t^{N+a-1}dt=\frac{r^{a+1}v^{p}(r)}{N+a}
\end{equation*}
so from \eqref{system} we have that
\begin{equation*}
r^{a}v^{p}(r)\leq u''(r)+\frac{N-1}{r}\cdot \frac{r^{a+1}v^{p}(r)}{N+a},
\end{equation*}
which further implies,
\begin{equation*}
\frac{1+a}{N+a}r^{a}v^{p}(r) \leq u''(r).
\end{equation*}
Similarly we have
\begin{equation*}
    v'(r)\leq \frac{r^{b}v^{q}(r)f(u'(r))}{N+b},
\end{equation*}
from which it follows
\begin{equation*}
    \frac{1+b}{N+b}r^{b}v^{q}(r)f(u'(r)) \leq v''(r).
\end{equation*}
We thus have the following two estimates for all $0<r<R$, which will be important as we proceed:
\begin{equation}\label{u'' bound}
\frac{1+a}{N+a}r^{a}v^{p}(r)\leq u'' \leq r^{a}v^{p}(r), 
\end{equation}

\begin{equation}\label{v'' bound}
 \frac{1+b}{N+b}r^{b}v^{q}(r)f(u'(r))\leq v'' \leq r^{b}v^{q}(r)f(u'(r)). 
 \end{equation}
Now, where $w=u'$, we multiply the right inequality in \eqref{v'' bound} by $v'$ and integrate over $[0,r]$ to find
\begin{equation}\label{v' bound}
\begin{aligned}
 \frac{v'(r)^{2}}{2} &\leq \int_{0}^{r} t^{b}v^{q}(t)v'(t)f(w(t)) dt \\
 &\leq r^{b}f(w(r))\int_{0}^{r}\left(\frac{v^{q+1}(t)}{q+1}\right)'dt \\
   &\leq Cf(w(r))v^{q+1}(r)\quad\mbox{for all}\ 0<r<R,
\end{aligned}
\end{equation}
from which it follows
\begin{equation}\label{root f bound}
    v^{-\frac{q+1}{2}}(r)v'(r)\leq C\sqrt{f(w(r))}.
\end{equation}
Now, fix $0<r_0<R$. From \eqref{u'' bound}, we see that 
for all $r>r_0$ we have
\begin{equation*}
    v^{p}(r)\leq Cw'(r)
\end{equation*}
and multiplying this inequality by \eqref{root f bound} we obtain
\begin{equation*}
    v^{p-\frac{q+1}{2}}(r)v'(r)\leq C\sqrt{f(w(r))}w'(r).
\end{equation*}
Integrating both sides of this equation over $(r_0,r)$ gives
\begin{equation*}
\begin{aligned}
    v^{p+\frac{1-q}{2}}(r)-v^{p+\frac{1-q}{2}}(r_0)&\leq C\int_{r_0}^{r}w'(t)\sqrt{f(w(t))}dt\\
    &\leq C\int_{w(r_0)>0}^{w(r)}\sqrt{f(t)}dt\\
    &\leq C\int_{w(0)=0}^{w(r)}\sqrt{f(t)}dt\quad\mbox{for all}\ r>r_0.
\end{aligned}
\end{equation*}
Now, since $\lim\limits_{r\to R}v(r)=\infty$, we have that there exists $\r<R$ such that
\begin{equation*}
    v^{p+\frac{1-q}{2}}(r)=\left(v^{p}(r)\right)^{\frac{2p-q+1}{2p}}\leq C  \int_{0}^{w(r)}\sqrt{f(t)}dt\quad\mbox{for all}\ \r<r.
\end{equation*}
Let $\tilde{r}=\mbox{max}\{r_{0},\r\}$. Using \eqref{u'' bound} again we see
\begin{equation*}
    (w'(r))^{\frac{2p-q+1}{2p}}\leq C  \int_{0}^{w(r)}\sqrt{f(t)}dt\quad\mbox{for all}\ \tilde{r}<r,
\end{equation*}
from which it follows that
\begin{equation*}
    \frac{w'(r)}{\left(\int_{0}^{w(r)}\sqrt{f(t)}dt\right)^{\frac{2p}{2p-q+1}}}\leq C \quad\mbox{for all}\ \tilde{r}<r.
\end{equation*}
We can integrate both sides of this expression over $(\tilde{r},r)$ to find
\begin{equation*}
    \int_{\tilde{r}}^{r}\frac{w'(t)dt}{\left(\int_{0}^{w(t)}\sqrt{f(s)}ds\right)^{\frac{2p}{2p-q+1}}}\leq C \int_{\tilde{r}}^{r}dt
\end{equation*}
or, after a change of variables
\begin{equation*}
    \int_{w(\tilde{r})}^{w(r)}\frac{ds}{\left(\int_{0}^{s}\sqrt{f(t)}dt\right)^{\frac{2p}{2p-q+1}}}\leq C (r-\tilde{r})\leq Cr.
\end{equation*}
Letting $r\to R$, we see
\begin{equation*}
    \int_{w(\tilde{r})}^{\infty}\frac{ds}{\left(\int_{0}^{s}\sqrt{f(t)}dt\right)^{\frac{2p}{2p-q+1}}}\leq C R<\infty.
\end{equation*}
Hence,
\begin{equation}\label{int sqrt f}
    \int_{1}^{\infty}\frac{ds}{\left(\int_{0}^{s}\sqrt{f(t)}dt\right)^{\frac{2p}{2p-q+1}}}<\infty.
\end{equation}
A minor adjustment of Lemma 4.1 in \cite{S2015} gives us that \eqref{int sqrt f} is equivalent to 
\begin{equation}\label{F integral}
    \int_{1}^{\infty}\frac{ds}{\left(\int_{0}^{s}F(t)dt\right)^{\frac{p}{2p-q+1}}} <\infty.
\end{equation}
Assume now that \eqref{F integral} holds. We see that \eqref{system} can be rewritten as
\begin{equation}\label{new system}
\left\{
    \begin{aligned}
    u(r)&=u(0)+\int_{0}^{r}t^{1-N}\left(\int_{0}^{t}s^{a}v^{p}(s)ds\right)dt, &&\quad r>0,\\
    v(r)&=v(0)+\int_{0}^{r}t^{1-N}\left(\int_{0}^{t}s^{b}v^{q}(s)f(|u'(s)|)ds\right)dt, &&\quad r>0,\\
    u(0)&>0, \quad v(0)>0.
    \end{aligned}
\right.
\end{equation}
Using a contraction mapping argument, we can show that system \eqref{new system} has a solution $(u,v)$ defined on some maximum interval $[0,R_{0})$. Now, fix $\r\in(0,R_{0})$, and, recalling \eqref{u'' bound} and \eqref{v'' bound}, we have
\begin{equation}\label{f,w' bounds}
    \begin{aligned}
    f(w(r))\leq Cv^{-q}(r)v''(r)\quad&\mbox{for all}\ \r\leq r<R_{0},\\
    w'(r)\leq Cv^{p}(r)&\mbox{for all}\ \r\leq r<R_{0}.
    \end{aligned}
\end{equation}
Multiplying the two inequalities in \eqref{f,w' bounds} and the integrating over $[\r,r]$ we find
\begin{equation*}
    F(w(r))-F(w(\r))\leq Cv^{p-q}(r)v'(r),
\end{equation*}
which we can express as
\begin{equation*}
    F(w(r))\leq Cv^{p-q}(r)v'(r).
\end{equation*}
 Using \eqref{f,w' bounds} again, we see that the above becomes
\begin{equation*}
    w'(r)F(w(r))\leq Cv^{2p-q}(r)v'(r)\quad\mbox{for all}\ \r\leq r<R_{0}.
\end{equation*}
Define
\begin{equation*}
    G(r)\coloneqq\int_{\r}^{r}F(t)dt\quad\mbox{for all}\ \r\leq r<R_{0}.
\end{equation*}
We thus have
\begin{equation*}
\begin{aligned}
    G(w(r))=\int_{\r}^{w(r)}F(t)dt&\leq C\int_{\r}^{r}v^{2p-q}(t)v'(t)dt\\
    &\leq C[v^{p}(r)]^{\frac{2p-q+1}{p}}\\
    &\leq C[w'(r)]^{\frac{2p-q+1}{p}}\quad\mbox{for all}\ \r\leq r<R_{0}.
\end{aligned}
\end{equation*}
Hence
\begin{equation*}
    C\leq \frac{w'(r)}{G(w(r))^{\frac{p}{2p-q+1}}}\quad\mbox{for all}\ \r\leq r<R_{0}.
\end{equation*}
Integrating the above over $[\r,r]$ gives
\begin{equation*}
    C(r-\r)\leq \int_{\r}^{r}\frac{w'(t)}{G(w(t))^{\frac{p}{2p-q+1}}}dt=\int_{w(\r)}^{w(r)}\frac{dt}{G(t)^{\frac{p}{2p-q+1}}}
\end{equation*}
Now, letting $r\to R_{0}$ we see that
\begin{equation*}
    C(R_{0}-\r)\leq C\int_{1}^{\infty}\frac{dt}{\phantom{ab}G(t)^{\frac{p}{2p-q+1}}} 
\end{equation*}
from which it follows that
\begin{equation*}
    R_{0}\leq C\int_{1}^{\infty}\frac{dt}{\left(\int_{0}^{s}F(t)dt\right)^{\frac{p}{2p-q+1}}} <\infty.
\end{equation*}
We have obtained a positive radial solution $(u,v)$ of \eqref{system} in $B_{R_{0}}$ satisfying $\lim_{r\to R_{0}}v(r)=\infty$. Now, if $R>0$ is any arbitrary radius, we set
\begin{equation*}     \tilde{f}(t)=\l^{b+\frac{a(1-q)}{p}+2(1+\frac{1-q}{p})}f\left(\frac{t}{\l}\right)\quad\mbox{for all}\ t\geq 0.
\end{equation*}
By the above, we know there exists $(\tilde{u},\tilde{v})$ satisfying
\begin{equation*}
	\left\{
    \begin{aligned}
    \triangle \tilde{u}&=r^{a}\tilde{v}^{p}\quad&\mbox{in}\ B_{R_{0}},\\
    \triangle \tilde{v}&=r^{b}\tilde{v}^{q} \tilde{f}(|\nabla \tilde{u}|)\quad&\mbox{in}\ B_{R_{0}},
    \end{aligned}
\right.
\end{equation*}
where $B_{R_{0}}$ is a maximum ball of existence. Let
\begin{equation*}
	\left\{
    \begin{aligned}
    u(r)&=\tilde{u}\left(\frac{r}{\l}\right)\quad&\mbox{in}\ B_{R},\\
    v(r)&=\l^{-\frac{a+2}{p}}\tilde{v}\left(\frac{r}{\l}\right)\quad&\mbox{in}\ B_{R}.
    \end{aligned}
 \right.
\end{equation*}
Taking $\l=\frac{R}{R_{0}}$ we see that $(u,v)$ is a solution to \eqref{system} in $B_{R}$.
\end{proof}
\begin{lemma}
System \eqref{system1} has a positive radial solution satisfying
\begin{equation}\label{u bound v not}
    u\ \mbox{is bounded in}\ B_{R}\ \mbox{and}\ \lim\limits_{|x|\nearrow R}v(x)=\infty
\end{equation}
if and only if
\begin{equation*}
    \int_{1}^{\infty}\frac{s\ ds}{\left(\int_{0}^{s}F(t)dt\right)^{\frac{p}{2p-q+1}}} <\infty.
\end{equation*}
Also, system \eqref{system1} has a positive radial solution satisfying
\begin{equation}\label{u,v unbound}
   \lim\limits_{|x|\nearrow R}u(x)= \lim\limits_{|x|\nearrow R}v(x)=\infty
\end{equation}
if and only if
\begin{equation*}
\int_{1}^{\infty}\frac{ds}{\left(\int_{0}^{s}F(t)dt\right)^{\frac{p}{2p-q+1}}} <\infty\quad\mbox{and} \int_{1}^{\infty}\frac{s\ ds}{\left(\int_{0}^{s}F(t)dt\right)^{\frac{p}{2p-q+1}}}=\infty.
\end{equation*}
\end{lemma}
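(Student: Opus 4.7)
The plan is to build on the preceding lemma by integrating $w = u'$ (rather than integrating $w'$), in order to decide when $u(R^-) - u(0) = \int_0^R w(r)\,dr$ is finite or infinite. Throughout, let $G(s) := \int_0^s F(t)\,dt$.

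The key input is a matched two-sided bound
\[
c_1\, G(w(r))^{p/(2p-q+1)} \;\leq\; w'(r) \;\leq\; c_2\, G(w(r))^{p/(2p-q+1)},
\]
valid on $[r_0, R)$ for every positive radial solution of \eqref{system1} with $v \to \infty$. The upper half is what the first direction of the previous lemma produced: combine $v^{p+(1-q)/2} \leq C \int_0^{w} \sqrt{f}$ with $v^p \leq C w'$ from \eqref{u'' bound}, then invoke Lemma~4.1 of \cite{S2015}. The lower half is what appeared in the second direction there: multiply $f(w) \leq C v^{-q} v''$ by $w' \leq C v^p$, integrate to $G(w(r)) \leq C v^{2p-q+1}(r)$, and use $v^p \leq C w'$ once more. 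Both derivations use only \eqref{u'' bound}--\eqref{v'' bound} and $v \to \infty$, so they apply to any such solution. The exact lower inequality $w'(r) \geq \frac{1+a}{N+a} r^a v^p(r)$ from \eqref{u'' bound}, coupled with $v \to \infty$, forces $w'(r) \to \infty$; together with the matched upper bound this gives $w(R^-) = \infty$, since otherwise $G(w)$ (and hence $w'$) would remain bounded.

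With these pieces in hand, the change of variables $\sigma = w(r)$ produces
\[
u(R^-) - u(r_0) \;=\; \int_{w(r_0)}^{\infty}\frac{\sigma\,d\sigma}{w'(r(\sigma))} \;\asymp\; \int^{\infty}\frac{\sigma\,d\sigma}{G(\sigma)^{p/(2p-q+1)}},
\]
which is finite iff $\int_1^\infty s\,ds/G(s)^{p/(2p-q+1)} < \infty$. Thus, for any positive radial solution with $v \to \infty$, $u$ is bounded precisely when the $s\,ds$-integral converges.

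Both parts now follow. For the first, $\int s\,ds/G^{p/(2p-q+1)} < \infty$ implies $\int ds/G^{p/(2p-q+1)} < \infty$, so the previous lemma yields a solution with $v \to \infty$, and the displayed equivalence forces $u$ bounded; the converse is immediate from the same equivalence. For the second, the convergence of $\int ds/G^{p/(2p-q+1)}$ provides (via the previous lemma) a solution with $v \to \infty$, and the divergence of the $s\,ds$-integral forces $u(R^-) = \infty$; conversely, if $u, v \to \infty$ both hold, then the previous lemma gives the first integral convergent while the displayed equivalence forces the $s\,ds$-integral divergent. The main obstacle I expect is cleanly extracting the matched two-sided bound (since the two halves came from differently structured arguments in the previous lemma) and rigorously justifying the passage $w(R^-) = \infty$; thereafter the change of variables and the comparison of $G$-integrals are routine.
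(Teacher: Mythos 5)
Your overall strategy matches the paper's: establish two-sided control on $w'(r)$ in terms of $G(w(r))^{p/(2p-q+1)}$ by reusing the estimates \eqref{u'' bound}--\eqref{v'' bound} and the two halves of the preceding lemma's argument, then reduce the boundedness of $u$ to the convergence of $\int_1^\infty s\,ds/G(s)^{p/(2p-q+1)}$. Your direct substitution $\sigma = w(r)$ in $\int_{\rho}^R w(r)\,dr$ is in fact a cleaner way to organize the last step than the paper's route (the paper integrates the $w'$-bounds first to get $\Gamma(w(r))\asymp R-r$, inverts $\Gamma$, and then performs two further changes of variable).

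One correction you should make explicit, since you flagged it as a worry: the two-sided bound is \emph{not} matched with the same argument on both sides. Invoking Lemma~4.1 of \cite{S2015} in the upper-bound derivation gives
\[
w'(r)\;\leq\; C\Bigl(\int_0^{w(r)}\!\!\sqrt{f}\,\Bigr)^{\frac{2p}{2p-q+1}}\;\leq\; C\, G\bigl(2w(r)\bigr)^{\frac{p}{2p-q+1}},
\]
with $2w(r)$ rather than $w(r)$ inside $G$, while the lower bound has $G(w(r))$. Since $G$ need not satisfy a doubling condition (take $f(t)=e^t$), you cannot in general replace $G(2w)$ by $CG(w)$. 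This is harmless for your argument, because after substituting $\sigma = w(r)$ the offending integral $\int^\infty \sigma\,d\sigma/G(2\sigma)^{p/(2p-q+1)}$ is, by the further substitution $\tau=2\sigma$, equal to $\tfrac14\int^\infty \tau\,d\tau/G(\tau)^{p/(2p-q+1)}$; thus both sides of your sandwich are comparable to the target integral. The paper handles the same asymmetry at a later stage, via $\Gamma(2w(r))\leq C_1(R-r)$ versus $\Gamma(w(r))\geq C_2(R-r)$ and the monotonicity of $\Gamma^{-1}$. With this adjustment your proof is correct, and the remaining bookkeeping (showing $w(R^-)=\infty$, then deducing both parts of the lemma from the equivalence applied to the solutions produced by the preceding lemma) is as you describe.
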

\begin{proof}
It is enough to prove this for a solution satisfying \eqref{u bound v not}, so assume $(u,v)$ is a solution to \eqref{system} satisfying \eqref{u bound v not}. From Lemma 1 we know that $f$ must satisfy \eqref{F integral}. From Lemma 4.1 in ~\cite{S2015}, we know that
\begin{equation*}
    \left(\int_{0}^{2s}F(t)dt\right)^{\frac{p}{2p-q+1}}\geq \left(\int_{0}^{s}\sqrt{f(t)}dt\right)^{\frac{2p}{2p-q+1}}\quad\mbox{for all}\ s\geq 0.
\end{equation*}
Arguing in a similar way to Lemma 1, we find that there exists $\r\in(0,R)$ such that
\begin{equation}\label{int comparison}
    \int_{w(r)}^{\infty}\frac{ ds}{\left(\int_{0}^{2s}F(t)dt\right)^{\frac{p}{2p-q+1}}}\leq \int_{w(r)}^{\infty}\frac{ ds}{\left(\int_{0}^{s}\sqrt{f(t)}dt\right)^{\frac{2p}{2p-q+1}}}\leq C_{1}(R-r)
\end{equation}
and
\begin{equation}\label{int lower bound}
    \int_{w(r)}^{\infty}\frac{ ds}{\left(\int_{0}^{s}F(t)dt\right)^{\frac{p}{2p-q+1}}}\geq C_{2}(R-r)
\end{equation}
for all $\r<r<R$. Now, let $\Gamma:(0,\infty)\rightarrow (0,\infty)$ be defined as
\begin{equation*}
    \Gamma(t)=\int_{t}^{\infty}\frac{ ds}{\left(\int_{0}^{s}F(\s)d\s\right)^{\frac{p}{2p-q+1}}}.
\end{equation*}
We note that $\Gamma$ is decreasing and by \eqref{F integral} we see $\lim_{t\to \infty}\Gamma(t)=0$. From \eqref{int comparison}and \eqref{int lower bound} we find
\begin{equation*}
    \Gamma(2w(r))\leq C_{1}(R-r)\quad\mbox{and}\quad\ \Gamma(w(r))\geq C_{2}(R-r)\quad\mbox{for all}\ \r\leq r<R. 
\end{equation*}
Since $\Gamma$ is decreasing, this then implies
\begin{equation*}
    \begin{aligned}
    2w(r)\geq \Gamma^{-1}(C_{1}(R-r))\quad&\mbox{for all}\ \r\leq r<R,\\
    w(r)\leq \Gamma^{-1}(C_{2}(R-r))\quad&\mbox{for all}\ \r\leq r<R.
    \end{aligned}
\end{equation*}
Now, recalling that
\begin{equation*}
    u(r)=u(\r)+\int_{\r}^{r}w(t)dt\quad\mbox{for all}\ \r\leq r<R,
\end{equation*}
we see that $\lim_{r\to R}u(r)=\infty$ if and only if
\begin{equation*}
    \int_{\r}^{R}w(t)dt=\infty
\end{equation*}
if and only if
\begin{equation*}
    \int_{\r}^{R}\Gamma^{-1}(C(R-r))dt=\infty,
\end{equation*}
for some $C>0$. Hence $\lim_{r\to R}u(r)=\infty$ if and only if
\begin{equation*}
    \int_{0}^{C(R-\r)}\Gamma^{-1}(u)du=\infty,
\end{equation*}
if and only if
\begin{equation*}
    \int_{0}^{1}\Gamma^{-1}(u)du=\infty.
\end{equation*}
The change of variables $t=\Gamma^{-1}(u)$ then gives us that $\lim_{r\to R}u(r)=\infty$ if and only if
\begin{equation*}
    \int_{1}^{\infty}\frac{s\ ds}{\left(\int_{0}^{s}F(t)dt\right)^{\frac{p}{2p-q+1}}}=\infty.
\end{equation*}
To show that the opposite implication holds, one can proceed as in Lemma 1 to obtain the local existence of a solution, and then use a scaling argument.
\end{proof}

\medskip

\section{ Proof of Theorem \ref{thm2} }

First, we assume that \eqref{int sqrt f inf} holds. We have seen that this implies the existence of a positive, radial solution $(u,v)$ in a maximum ball. Both $u$ and $v$ are increasing, and by the corollary \ref{cor1} we know that $u$ and $v$ are both bounded. Hence the domain of existence must be $\bR^{N}$.
\bigskip

Conversely, assume that \eqref{int sqrt f inf} does not hold, and let $(U,V)$ be a solution to \eqref{sys1}. We know from Lemma 2 that there exists a solution $(\tilde{u},\tilde{v})$ satisfying
\begin{equation*}
\lim\limits_{r\to 1}\tilde{v}(r)=\infty.
\end{equation*}
Now, if $(U,V)$ is a solution to \eqref{sys1} in $\bR^{N}$, then so is $(U_{\l},V_{\l})$, where 
\begin{equation*}
\begin{aligned}
    U_{\l}(r)&=\l^{\frac{p(b+2-s)-(a+2)(q-1)}{sp+q-1}}U(\l r) \\
    V_{\l}(r)&=\l^{\frac{b+2+s(a+1)}{sp+q-1}}V(\l r).
\end{aligned}
\end{equation*}
Therefore, by considering small enough $\l>0$, we are justified in assuming that $V(0)>\tilde{v}(0)>0$. Define
\begin{equation*}
    R\coloneqq \mbox{sup}\{r\in(0,1)|\quad V(t)>\tilde{v}(t)\quad\mbox{in}\ (0,r)\},
\end{equation*}
and assume $R\neq 1$. Now, for all $0<r<R$ we have, where $W=U'$ and $\tilde{w}=\tilde{u}'$,
\begin{equation*}
    (Wr^{N-1})'=r^{N+a-1}V^{p}(r)>r^{N+a-1}\tilde{v}^{p}(r)=(\tilde{w}r^{N-1})'.
\end{equation*}
An integration over $[0,r]$, where $0<r\leq R$, yields $W>\tilde{w}$ on $(0,R]$. Using a similar strategy we see
\begin{equation*}
(V'r^{N-1})'=r^{N+b-1}V^{q}(r)W^{s}(r)>r^{N+b-1}\tilde{v}^{q}(r)\tilde{w}^{s}(r)=\l^{B}(\tilde{v}'(r)r^{N-1})'.
\end{equation*}
This implies that $V'>\tilde{v}'$ on $(0,R]$ and hence $V>\tilde{v}'$ on $[0,R]$, contradicting the fact that $R\neq 1$. Hence $R=1$, and so $V>\tilde{v}$ on $(0,1)$. But this then implies that $\lim\limits_{r\to 1}V(r)=\infty$, which contradicts the fact that $V(r)$ is a global solution.

\section{Proof of the Theorem \ref{thmrn}}
We obtained that $u'$, $v'$, $u$, $v$ are increasing in the proof of the Theorem \ref{thm1} and 
\begin{equation*}
\left\{
\begin{aligned}
&u'(r)=r^{1-N}\int_{0}^{r}{t^{N-1+a}v^{p}(t)}dt\quad\mbox{ for all } r>0,\\
&v'(r)=r^{1-N}\int_{0}^{r}{t^{N-1+b} v^{q}(t)(u')^{s}(t)}dt \quad\mbox{ for all } r>0,
\end{aligned}
\right.
\end{equation*}
which gives us
\begin{equation}\label{rn1}
\frac{r^{a+1}v^{p}(0)}{N+a}\leq u'(r)\leq \frac{r^{a+1}v^{p}(r)}{N+a} \quad\mbox{ for all } r>0
\end{equation}
and
\begin{equation}\label{rn2}
\frac{v^{ps+q}(0)r^{(a+1)s+b+1}}{(N+b)(N+a)^s}\leq v'(r)\leq \frac{r^{b+1}v^{q}(r)u'^{s}(r)}{N+b} \quad\mbox{ for all } r>0.
\end{equation}
Using \eqref{rn1} and \eqref{rn2} we obtain that $u'(r)$, $v'(r)$, $u(r)$, $v(r)$ tend to infinty as $r\rightarrow \infty$. Next, we do the following change of variables ( see \cite{HV1996}, \cite{BVH2010,G2012} )
$$
X(t)= \frac{ru'(r)}{u(r)},\;\;Y(t)= \frac{rv'(r)}{v(r)},\;\;Z(t)=\frac{r^{a+1}v^{p}(r)}{u'(r)} \mbox{ and }W(t)=\frac{r^{b+1}v^{q}(r)u'^{s}(r)}{v'(r)}, 
$$
where $t= \ln(r)$ for  $r\in (0,\infty)$. Direct computation shows that $(X(t),Y(t),Z(t),W(t))$ satisfies
\begin{equation}\label{rn3}
\left\{
\begin{aligned}
&X_{t}= X(Z-(N-2)-X) \quad\mbox{ for all } t\in \bR, \\
&Y_{t}= Y(W-(N-2)-Y) \quad\mbox{ for all } t\in \bR, \\
&Z_{t}= Z(N+a+pY-Z) \quad\mbox{ for all } t\in \bR, \\
&W_{t}= W(sZ+N-sN+s+b+qY-W) \quad\mbox{ for all } t\in \bR.
\end{aligned}
\right.
\end{equation}
Also, by L'Hopital's rule we deduce that $\lim_{t\rightarrow \infty}X(t)=2-N+\lim_{t\rightarrow \infty}Z(t)$. Hence, it is enough to study the last three equations of \eqref{rn3}, that is, 
\begin{equation}\label{rn4}
\left\{
\begin{aligned}
&Y_{t}= Y(W-(N-2)-Y) \quad\mbox{ for all } t\in \bR, \\
&Z_{t}= Z(N+a+pY-Z) \quad\mbox{ for all } t\in \bR, \\
&W_{t}= W(sZ+N-sN+s+b+qY-W) \quad\mbox{ for all } t\in \bR.
\end{aligned}
\right.
\end{equation}
Our system can be rewritten as
\begin{equation}\label{rn5}
\xi_{t}= h(\xi)
\end{equation}
where
$$
\xi=\left(\begin{array}{c}Y(t)\\Z(t)\\W(t)\end{array}\right) \quad\mbox{ and } \quad h(\xi)= \left(\begin{array}{c} Y(W-(N-2)-Y)\\Z(N+a+pY-Z)\\ W(sZ+N-sN+s+b+qY-W) \end{array}\right).
$$

One notices that the system \eqref{rn5} is cooperative. Therefore, the following comparison principle holds:
\begin{lemma}\label{lrn1}
Let us assume that $\xi(t)= \left(\begin{array}{c}Y(t)\\Z(t)\\W(t)\end{array}\right)$ and $\tilde{\xi}(t)= \left(\begin{array}{c}\tilde{Y}(t)\\ \tilde{Z}(t)\\ \tilde{W}(t)\end{array}\right)$ are the two nonnegative solutions of \eqref{rn5} such that
$$
Y(t_0)\geq \tilde{Y}(t_0), \;\;\;Z(t_0)\geq \tilde{Z}(t_0),\;\;\; W(t_0)\geq \tilde{W}(t_0)
$$
for some $t_0\in \bR$. Then, we have
$$
Y(t)\geq \tilde{Y}(t), \;\;\;Z(t)\geq \tilde{Z}(t),\;\;\; W(t)\geq \tilde{W}(t) \quad\mbox{ for all }t\geq t_{0}.
$$
\end{lemma}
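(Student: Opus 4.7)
The plan is to reduce the monotonicity statement to showing that the nonnegative orthant is forward-invariant under the linear nonautonomous system satisfied by the difference $\eta(t) := \xi(t) - \tilde\xi(t)$.

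First, by the fundamental theorem of calculus, one may write
\begin{equation*}
h(\xi(t)) - h(\tilde\xi(t)) = A(t)\,\eta(t), \qquad A(t) := \int_0^1 Dh\bigl(\tilde\xi(t) + s\,\eta(t)\bigr)\,ds,
\end{equation*}
so that $\eta_t = A(t)\eta$ on the common interval of existence. A direct inspection of the Jacobian $Dh$ of the right-hand side of \eqref{rn5} shows that its off-diagonal entries are (in some order) $Y$, $pZ$, $qW$ and $sW$, all of which are nonnegative whenever $Y,Z,W \geq 0$. Hence $A(t)$ is quasi-positive: $A_{ij}(t) \geq 0$ for $i \neq j$.

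The core of the argument is the classical invariance principle for quasi-positive linear systems, which I would prove by a perturbation trick rather than invoke as a black box. Fix $T > t_0$ and set $M := \sup_{t \in [t_0,T]} \|A(t)\|_\infty$. For each $\varepsilon > 0$ consider the shifted function
\begin{equation*}
\eta^\varepsilon(t) := \eta(t) + \varepsilon\,e^{(M+1)(t-t_0)}(1,1,1)^\top,
\end{equation*}
which is componentwise strictly positive at $t_0$. If $\eta^\varepsilon$ failed to remain componentwise positive on $[t_0,T]$, let $t_1$ be the first time at which some coordinate, say $\eta^\varepsilon_i$, vanishes. At $t_1$ the remaining coordinates of $\eta^\varepsilon$ are still nonnegative and $(\eta^\varepsilon_i)'(t_1) \leq 0$. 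Using $\eta_t = A(t)\eta$ together with quasi-positivity of $A(t_1)$ (which ensures the cross terms contribute nonnegatively) and the bound $\sum_j A_{ij}(t_1) \leq M$, a short computation yields
\begin{equation*}
(\eta^\varepsilon_i)'(t_1) \geq \varepsilon\,e^{(M+1)(t_1-t_0)} > 0,
\end{equation*}
contradicting $(\eta^\varepsilon_i)'(t_1) \leq 0$. Hence $\eta^\varepsilon(t) > 0$ on $[t_0,T]$ for every $\varepsilon > 0$, and letting $\varepsilon \to 0^+$ gives $\eta(t) \geq 0$ on $[t_0,T]$. Since $T$ is arbitrary within the common interval of existence, the conclusion extends to $[t_0,\infty)$.

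The one delicate point, and the main technical obstacle, is controlling the size of $A(t)$: because $Dh$ is polynomial in $(Y,Z,W)$, the constant $M$ a priori depends on $T$ and could blow up if the solutions did. This is not a genuine difficulty since the argument above only requires $M$ to be finite on each compact subinterval of the common interval of existence, and the solutions relevant to Theorem~\ref{thmrn} remain bounded throughout $[t_0,\infty)$, so a single $M$ suffices.
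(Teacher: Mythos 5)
The paper does not actually prove Lemma~\ref{lrn1}: immediately before the statement it observes that system \eqref{rn5} is cooperative and then declares that ``the following comparison principle holds,'' implicitly invoking the standard Kamke--M\"uller comparison theorem for cooperative systems (as in Hirsch's framework). Your proposal supplies a self-contained proof of that standard fact, and it is correct. You decompose $h(\xi)-h(\tilde\xi)$ via the mean-value form of the Jacobian, note that all six off-diagonal entries of $Dh$ (namely $0$, $Y$, $pZ$, $0$, $qW$, $sW$) are nonnegative on the nonnegative orthant (which matters because the segment $(1-s)\tilde\xi+s\xi$ stays in the orthant by convexity), and then use the $\varepsilon\,e^{(M+1)(t-t_0)}$-perturbation to show the difference cannot exit the orthant on any compact subinterval. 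This buys a fully elementary, reference-free proof at the cost of a page of computation, whereas the paper saves space by citing the cooperative-systems machinery already in play for Theorems~\ref{thmdet1}--\ref{thmdet3}. One minor remark: your closing worry about boundedness of $(Y,Z,W)$ is unnecessary---as you yourself note, the argument only needs $M=\sup_{[t_0,T]}\|A\|_\infty<\infty$ for each fixed $T$, and this follows from continuity of the two solutions on $[t_0,T]$ without any appeal to Lemma~\ref{lrn3}; appealing to the latter here would in fact be circular, since Lemma~\ref{lrn3} uses Lemma~\ref{lrn1}.
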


\bigskip

\bigskip

Using \eqref{rn1} and \eqref{rn2} we deduce that $Z\geq N+a$ and $W\geq N+b$. Hence, we only have two equilibria of \eqref{rn4} which satisfy $Z\geq N+a$ and $W\geq N+b$, that is,
$$
\xi_{1}= \left(\begin{array}{c}0\\N+a\\N+s(a+1)+b\end{array}\right) \quad\mbox{  and }
\xi_{2}=\left(\begin{array}{c}2 + \frac{b+2q+s(1+a+2p)}{1-ps-q},\\N+a + p\Big(2+\frac{b+2q+s(1+a+2p)}{1-ps-q}\Big)\\N + \frac{b+2q+s(1+a+2p)}{1-ps-q}\end{array}\right).
$$
\begin{lemma}\label{lrn2}
$\xi_{2}$ is asymptotically stable.
\end{lemma}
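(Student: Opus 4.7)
The plan is to apply the standard linearization principle: compute the Jacobian $Dh(\xi_2)$ of the vector field in \eqref{rn5} at the equilibrium $\xi_2$, verify via the Routh--Hurwitz criterion that all of its eigenvalues have strictly negative real parts, and then invoke the classical linearization theorem for ODEs to conclude that $\xi_2$ is asymptotically stable.

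The first step is to read off $Dh(\xi_2)$ from \eqref{rn4}. Writing $(Y^*,Z^*,W^*)$ for the coordinates of $\xi_2$ and using the equilibrium relations $W^*=(N-2)+Y^*$, $Z^*=N+a+pY^*$ and $W^*=sZ^*+N-sN+s+b+qY^*$, the three diagonal entries of $Dh(\xi_2)$ collapse to $-Y^*,\,-Z^*,\,-W^*$, while the off-diagonal entries follow immediately from differentiating the right-hand sides of \eqref{rn4}. This yields
\begin{equation*}
Dh(\xi_2)=\begin{pmatrix}-Y^* & 0 & Y^*\\ pZ^* & -Z^* & 0\\ qW^* & sW^* & -W^*\end{pmatrix}.
\end{equation*}

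Setting $\alpha=Y^*$, $\beta=Z^*$, $\gamma=W^*$ and expanding $\det(\lambda I-Dh(\xi_2))$ along the first row, the characteristic polynomial becomes
\begin{equation*}
P(\lambda)=\lambda^3+(\alpha+\beta+\gamma)\lambda^2+\bigl(\alpha\beta+(1-q)\alpha\gamma+\beta\gamma\bigr)\lambda+(1-ps-q)\alpha\beta\gamma.
\end{equation*}
For a cubic $\lambda^3+a_2\lambda^2+a_1\lambda+a_0$, the Routh--Hurwitz criterion requires $a_0,a_1,a_2>0$ and $a_1a_2>a_0$. Positivity of $a_2$ is immediate, and the hypothesis $ps+q<1$ forces $q<1$, which gives $a_0>0$ and $a_1>0$. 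A direct expansion yields
\begin{equation*}
a_1a_2-a_0=\alpha^2\beta+\alpha\beta^2+\beta^2\gamma+\beta\gamma^2+(1-q)(\alpha^2\gamma+\alpha\gamma^2)+(2+ps)\alpha\beta\gamma,
\end{equation*}
a sum of manifestly positive monomials. Hence every eigenvalue of $Dh(\xi_2)$ has strictly negative real part, and the linearization theorem implies that $\xi_2$ is asymptotically stable.

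The only foreseeable obstacle is computational: one must expand $P(\lambda)$ carefully so that the contribution $q\alpha\gamma\,\lambda$ from the $(2,1)$-cofactor and the constant contribution $(ps+q)\alpha\beta\gamma$ combine correctly with the principal product $(\lambda+\alpha)(\lambda+\beta)(\lambda+\gamma)$ to expose the clean sign pattern above. No structural input beyond $ps+q<1$ enters at this stage; in particular the divergence hypothesis \eqref{div} is not needed here and will only be invoked later, through Theorem~\ref{thmdet3}, to rule out cycles and upgrade this local stability statement to the global limit behaviour asserted in Theorem~\ref{thmrn}.
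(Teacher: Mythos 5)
Your argument is correct and structurally parallel to the paper's: both compute the same linearized matrix (the diagonal entries collapsing to $-Y^*,-Z^*,-W^*$ by the equilibrium relations), obtain the same characteristic polynomial $\lambda^3+a_2\lambda^2+a_1\lambda+a_0$ with $a_0=(1-ps-q)Y^*Z^*W^*$, and reduce the question to the Routh--Hurwitz conditions $a_0,a_1,a_2>0$ and $a_1a_2>a_0$. Where you part ways is the final inequality. The paper handles it by a case split: it first notes that $P(\lambda)>0$ for $\lambda\ge 0$ (so any real root is negative), and then for the complex-conjugate case shows $P(-a_2)<0$ by applying AM--GM to the three-term sums defining $a_2$ and $a_1$, yielding $a_1a_2\ge 9(1-q)^{1/3}\gamma>(1-ps-q)\gamma$. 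You instead expand $a_1a_2-a_0$ directly into a sum of monomials with coefficients $1$, $1-q>0$, and $2+ps>0$, which is manifestly positive without any auxiliary inequality. Your route is a bit cleaner: it invokes the Routh--Hurwitz criterion once rather than splitting on the number of real roots, and the positivity of $a_1a_2-a_0$ is a bare-hands identity rather than an estimate. The paper's AM--GM route is slicker to state but adds an implicit step (that $P(-a_2)<0$ forces ${\rm Re}\,\lambda_{2,3}<0$ once the real root exceeds $-a_2$) which your unified criterion absorbs automatically. Both approaches correctly identify $ps+q<1$ as the only hypothesis needed here and agree that \eqref{div} plays no role at this stage.

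One small cosmetic warning: you recycle the letters $\alpha,\beta,\gamma$ for the coordinates $Y^*,Z^*,W^*$ of $\xi_2$, whereas the paper uses $\alpha,\beta,\gamma$ for the coefficients of the characteristic polynomial. If your proof is read alongside the paper's statement of Theorem~\ref{thmrn} (which already reserves $A,B,K,D$ for these coordinates), this clash could confuse a reader; better to keep the paper's $Y_2,Z_2,W_2$.
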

\begin{proof}
At $\xi_2$, we have the following linearized matrix: 
$$
M=\left[\begin{array}{ccc}-Y_2&0&Y_2\\pZ_2&-Z_2&0\\qW_2&sW_2&-W_2\end{array}\right],
$$
and the associated characteristic polynomial of $M$ is
$$
P(\lambda)=\det(\lambda I-M)=\lambda^{3}+\alpha\lambda^{2}+\beta\lambda+(1-ps-q)\gamma,
$$
where
\begin{equation*}
\begin{aligned}
\alpha&=Y_2+Z_2+W_2\\
\beta&=Y_2Z_2+Z_2W_2+(1-q)Y_2W_2\\
\gamma&=Y_2Z_2W_2.
\end{aligned}
\end{equation*}
As $ps+q<1$ and  $\alpha$, $\beta$, $\gamma>0$, we have that $P(\lambda)>0$ for all $\lambda \geq 0$. In case $P$ has three real roots, then all of them are negative, which makes $\xi_2$ asymptotically stable. Next, we need to consider the case where $P$ has exactly one real root. So, let us assume that $\lambda_1 \in \bR$ and $\lambda_2,\lambda_3 \in \mathbb{C}\setminus \bR$ be the roots of characteristic polynomial  $P$. We need to show that ${\rm Re}(\lambda_2)={\rm Re}(\lambda_3)<0$, that is, $P(-\alpha)=-\beta\alpha+(1-ps-q)\gamma<0$, which is same as claiming, $\beta\alpha>(1-ps-q)\gamma$. By the use of AM-GM inequality we get that
$$
\alpha \geq 3\sqrt[3]{Y_2Z_2W_2}\quad  \mbox{ and } \quad \beta \geq 3(1-q)^{\frac{1}{3}}\sqrt[3]{(Y_2Z_2W_2)^{2}}
$$
which further gives us the desired result, that is, $\alpha\beta> (1-ps-q)\gamma$. Hence, $\xi_2$ is asymptotically stable.

\end{proof}

\begin{lemma}\label{lrn3}
For all $t\in \bR$, we have
\begin{equation}\label{rn6}
0\leq Y(t)\leq 2 + \frac{b+2q+s(1+a+2p)}{1-ps-q},
\end{equation}
\begin{equation}\label{rn7}
N+a\leq Z(t)\leq N+a + p\Big(2+\frac{b+2q+s(1+a+2p)}{1-ps-q}\Big),
\end{equation}
\begin{equation}\label{rn8}
N+s(a+1)+b\leq W(t)\leq N + \frac{b+2q+s(1+a+2p)}{1-ps-q}.
\end{equation}
\end{lemma}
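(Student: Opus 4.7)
The plan is to apply the comparison principle of Lemma \ref{lrn1} with the two constant trajectories $t\mapsto\xi_{1}$ and $t\mapsto\xi_{2}$, which are genuine solutions of \eqref{rn5} because both $\xi_{1}$ and $\xi_{2}$ are equilibria. Once I exhibit a single $t_{0}\in\bR$ with $\xi_{1}\leq (Y,Z,W)(t_{0})\leq \xi_{2}$ componentwise, the cooperativity of \eqref{rn5} propagates both inequalities forward in $t$; letting $t_{0}\to -\infty$ then gives \eqref{rn6}--\eqref{rn8} for every $t\in\bR$.

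I first identify the limit of $(Y,Z,W)(t)$ as $t\to -\infty$, i.e.\ as $r\to 0^{+}$. Using $u(0),v(0)>0$ together with the integral identities for $u'$ and $v'$, one obtains the expansions
\[
u'(r)=\frac{v^{p}(0)}{N+a}\,r^{a+1}\bigl(1+o(1)\bigr),\qquad v'(r)=\frac{v^{ps+q}(0)}{(N+a)^{s}\bigl(N+b+(a+1)s\bigr)}\,r^{1+b+(a+1)s}\bigl(1+o(1)\bigr)
\]
as $r\to 0^{+}$. Substituting into the definitions of $Y,Z,W$ yields $(Y,Z,W)(t)\to \bigl(0,N+a,N+s(a+1)+b\bigr)=\xi_{1}$ as $t\to -\infty$. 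Since $\xi_{1}<\xi_{2}$ strictly in every coordinate, this already forces $(Y,Z,W)(t_{0})<\xi_{2}$ for all sufficiently negative $t_{0}$.

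The remaining and hardest step is to produce a very negative $t_{0}$ at which $(Y,Z,W)(t_{0})\geq \xi_{1}$. For $Y$ and $Z$ this is immediate: $Y=rv'/v\geq 0=Y_{1}$, and \eqref{rn1} gives $Z\geq N+a=Z_{1}$. However, for $W$ the crude integral estimate \eqref{rn2} only produces $W\geq N+b$, strictly weaker than the required $W_{1}=N+s(a+1)+b$. I would resolve this by linearising \eqref{rn4} at $\xi_{1}$: the Jacobian is lower triangular with eigenvalues $\lambda_{1}=s(a+1)+b+2>0$, $-(N+a)<0$ and $-W_{1}<0$, so $\xi_{1}$ is a hyperbolic saddle whose one-dimensional unstable manifold has an eigenvector $(1,v_{2},v_{3})$ with $v_{2}=p(N+a)/(N+a+\lambda_{1})>0$ and $v_{3}=W_{1}(q+sv_{2})/(W_{1}+\lambda_{1})>0$, lying strictly in the positive octant. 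Since $(Y,Z,W)(t)\to\xi_{1}$ as $t\to -\infty$, the trajectory must approach $\xi_{1}$ along this unstable manifold, which forces $(Y,Z,W)(t_{0})>\xi_{1}$ in every coordinate for all sufficiently negative $t_{0}$.

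Fixing such $t_{0}$, Lemma \ref{lrn1} applied with $\tilde{\xi}\equiv \xi_{1}$ propagates the lower bound, and the same lemma applied with the roles of $\xi$ and $\tilde{\xi}$ swapped together with $\tilde{\xi}\equiv \xi_{2}$ propagates the upper bound; hence $\xi_{1}\leq (Y,Z,W)(t)\leq \xi_{2}$ for all $t\geq t_{0}$, and letting $t_{0}\to -\infty$ gives \eqref{rn6}--\eqref{rn8} on all of $\bR$. I expect the main obstacle to be exactly the $W$-coordinate lower bound, since the available integral estimates lose the sharp factor $s(a+1)$, and the invariant-manifold/linearisation argument just sketched is the natural way to recover it.
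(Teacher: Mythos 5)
Your proof is correct and reaches the same conclusion, but it handles the crucial ``$W$ near $t=-\infty$'' step in a genuinely different way from the paper. Both proofs reduce to exhibiting times $t_{0}$ arbitrarily close to $-\infty$ at which $\xi_{1}\leq (Y,Z,W)(t_{0})\leq \xi_{2}$, and both then invoke the cooperative comparison Lemma~\ref{lrn1}. You correctly identify that the elementary bound $W\geq N+b$ from \eqref{rn2} misses the sharp threshold $W_{1}=N+s(a+1)+b$, and you correctly verify $\xi_{1}$ is a hyperbolic saddle with eigenvalues $s(a+1)+b+2$, $-(N+a)$, $-W_{1}$ and a strictly positive unstable eigenvector $(1,v_{2},v_{3})$. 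Your argument then passes through the stable/unstable manifold theorem: since the trajectory converges to the hyperbolic equilibrium $\xi_{1}$ as $t\to-\infty$, it lies on the one-dimensional unstable manifold, and since $Y>0=Y_{1}$ it lies on the branch tangent to $+(1,v_{2},v_{3})$, forcing all three coordinates strictly above $\xi_{1}$ for sufficiently negative $t$.

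The paper's route is more elementary. For the upper bound on $W$ it argues by contradiction: if $W>W_{2}$ on a left-neighbourhood of $-\infty$, then (using $Y\to 0<Y_{2}$, $Z\to N+a<Z_{2}$) $W_{t}<0$ there, so $W$ is monotone, hence $\ell=\lim_{t\to-\infty}W(t)$ exists; an application of L'H\^opital to the original radial variables then pins $\ell=N+s(a+1)+b=W_{1}<W_{2}$, a contradiction, yielding a sequence $t_{j}\to-\infty$ with $W(t_{j})\leq W_{2}$. The paper leaves the lower bound as ``similarly,'' and the symmetric argument would be: assume $W<W_{1}$ near $-\infty$, use $Z\geq N+a$, $Y\geq 0$ to derive $W_{t}>0$, conclude $W$ monotone with $\ell=\lim_{t\to-\infty}W(t)<W_{1}$, and again use L'H\^opital to force $\ell=W_{1}$, a contradiction. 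So your approach replaces the monotonicity-plus-L'H\^opital contradiction with the linearisation and invariant-manifold machinery; it is cleaner in that your direct asymptotic expansion establishes $(Y,Z,W)(t)\to\xi_{1}$ unconditionally (the paper only obtains this limit under a monotonicity hypothesis within a contradiction argument), but it invokes heavier dynamical-systems tools than the paper needs.
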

\begin{proof}
As $v'(0)=0$ and $v(0)=0$, we deduce that $\lim_{t\rightarrow -\infty}Y(t)= \lim_{r\rightarrow 0}\frac{rv'(r)}{v(r)}=0$. 
	
Next, we show that there exists $t_{j}\rightarrow -\infty $ such that
\begin{equation}\label{rn9}
\left\{
\begin{aligned}
&Y(t_{j})\leq Y_2,\\
&Z(t_j)\leq Z_2,\\
&W(t_j)\leq W_2.
\end{aligned}
\right.
\end{equation}
Since $\lim_{t\rightarrow -\infty}Y(t)=0$ and $\lim_{t\rightarrow -\infty}Z(t)= N+a$, we only need to prove the last part of \eqref{rn9}. So, let us assume by contradiction that this is not true. Thus $W> W_2$ in $(-\infty,t_0)$ for some $t_0\in \bR$. Then, we have
$$
W_t= W(sZ+N-sN+s+b+qY-W)< 0 \quad\mbox{ in }(-\infty,t_0).
$$
for small enough $t_0$. Therefore, $W$ is decreasing in the neighbourhood of $-\infty$, that is, there exists $\ell= \lim_{t\rightarrow -\infty}W(t)$. Again, by the use L'Hopital's rule we obtain
\begin{equation*}
\begin{aligned}
\ell=\lim_{t\rightarrow -\infty}W(t)&=\lim_{r\rightarrow 0}\frac{r^{b+1}v^{q}(r)u'^{s}(r)}{v'(r)}\\
&=\frac{s(N+a)-s(N-1)+b+1}{1-\frac{N-1}{\ell}},
\end{aligned}
\end{equation*}
which yields $\ell=N+s(a+1)+b<W_2$ and this contradicts the assumption that $W>W_{2}$ in a neighbourhood of $-\infty$. Hence, with this we have proven the last part of \eqref{rn9}. Next, we apply the Comparison Lemma \ref{lrn1} on all the intervals $[t_j,\infty)$ for $j\geq 1$ in order to obtain the upper bound inequalities in Lemma \ref{lrn3}. Similarly, the lower bound inequalities can be obtained.
\end{proof}

\medskip

Now, assume $L=\overline{[[\xi_{1},\xi_{2}]]}\subset \bR^{3}$. We have that $\omega(\zeta)\subseteq L $ by Lemma \ref{lrn3}. As $\zeta_2$ is asymptotically stable, we get that $L$ has no circuits. Also, by \eqref{div}, one could obtain that
$$
{\rm div}\, h(\xi)=-W+(s-2)Z+(-2+p+q)Y+2+a+N(1-s)+s+b<0\quad\mbox{ in } L.
$$
By the use of Theorems \ref{thmdet2} and \ref{thmdet3} we get that $\omega(\xi)$ reduces to one of the equilibria $\xi_1$ or $\xi_2$. In case $\xi(t)\rightarrow \xi_1$ as $t\rightarrow \infty$, then we have that $Y(t)\rightarrow 0$ as $t\rightarrow \infty$. Also, we obtain that $Y_t> 0$ in a neighbourhood of infinity by using the second equation of \eqref{rn3} which is impossible given that $Y(t)> 0$ in $\bR$. Therefore, $\xi(t)\rightarrow \xi_2$ as $t\rightarrow \infty$, that is
\begin{equation*}
\begin{aligned}
\lim_{t\rightarrow \infty}X(t)&= 2+a + p\Big(2+\frac{b+2q+s(1+a+2p)}{1-ps-q}\Big),\\
\lim_{t\rightarrow \infty}Y(t)&= 2 + \frac{b+2q+s(1+a+2p)}{1-ps-q},\\
\lim_{t\rightarrow \infty}Z(t)&= N+a + p\Big(2+\frac{b+2q+s(1+a+2p)}{1-ps-q}\Big),\\
\lim_{t\rightarrow \infty}W(t)&= N + \frac{b+2q+s(1+a+2p)}{1-ps-q}.
\end{aligned}
\end{equation*}
Now , using $(X(t),Y(t),Z(t),W(t))$ , we get
\begin{equation*}
\lim_{|x|\rightarrow \infty}\frac{u(x)}{|x|^{\frac{(a+2)(1-ps-q)+ ps(a+1)+bp+2q}{1-ps-q}}}= \frac{(AB^s K)^{\frac{p}{ps + q - 1}}}{DK}
\end{equation*}
and
\begin{equation*}
\lim_{|x|\rightarrow \infty}\frac{v(x)}{|x|^{\frac{(a+1)s + b + 2}{1-ps-q}}}= (AB^s K)^{\frac{1}{ps + q - 1}},
\end{equation*}
where, 
$$
A = \lim_{t\rightarrow \infty}Y(t),
$$

$$
B = \lim_{t\rightarrow \infty}Z(t),
$$

$$
K = \lim_{t\rightarrow \infty}W(t),
$$

$$
D = \lim_{t\rightarrow \infty}X(t).
$$

\end{document}